\title{Excluding four-edge paths and their complements}
\author{Maria Chudnovsky \thanks{Columbia University, New York, NY 10027, USA. E-mail: mchudnov@columbia.edu. Partially supported by NSF grants DMS-1001091
and IIS-1117631.}, Peter Maceli \thanks{Columbia University, New York, NY 10027, USA. E-mail: plm2109@columbia.edu.}, and Irena Penev \thanks{Park Tudor School, Indianapolis, IN 46240, USA. E-mail: ip2158@caa.columbia.edu.}} 
\date{\today} 
\newtheorem{theorem}{}[section]
\begin{document} 
\maketitle 
\noindent 

\begin{abstract} 
\noindent 
We prove that a graph $G$ contains no induced four-edge path and no induced complement of a four-edge path if and only if $G$ is obtained from five-cycles and split graphs by repeatedly applying the following operations: substitution, split graph unification, and split graph unification in the complement (``split graph unification'' is a new class-preserving operation that is introduced in this paper).
\end{abstract}

\section{Introduction} 

All graphs in this paper are finite and simple. We denote by $P_5$ the path on five vertices (and four edges); this path is also called a {\em four-edge path}. The complement of a graph $G$ is denoted by $\overline{G}$. The graph $\overline{P_5}$, the complement of the four-edge path, is also called a {\em house}. Given graphs $G$ and $H$, we say that $G$ is {\em $H$-free} if $G$ does not contains (an isomorphic copy of) $H$ as an induced subgraph. Given a family $\mathcal{H}$ of graphs, we say that a graph $G$ is {\em $\mathcal{H}$-free} provided that $G$ is $H$-free for all $H \in \mathcal{H}$. The goal of this paper is to understand the structure of $\{P_5,\overline{P_5}\}$-free graphs. 
\\
\\
We begin with a few definitions. The vertex-set of a graph $G$ is denoted by $V_G$. Given $X \subseteq V_G$, we denote by $G[X]$ the subgraph of $G$ induced by $X$; given $v_1,...,v_n \in V_G$, we often write $G[v_1,...,v_n]$ instead of $G[\{v_1,...,v_n\}]$. We denote by $G \smallsetminus X$ the graph $G[V_G \smallsetminus X]$, and for $v \in V_G$, we often write $G \smallsetminus v$ instead of $G \smallsetminus \{v\}$. A {\em clique} in $G$ is a set of pairwise adjacent vertices in $G$, and the {\em clique number} of $G$, denoted by $\omega(G)$, is the maximum size of a clique in $G$. A {\em stable set} in $G$ is a set of pairwise non-adjacent vertices in $G$. $G$ is said to be a {\em split graph} if its vertex-set can be partitioned into a (possibly empty) clique and a (possibly empty) empty set. The chromatic number of $G$ is denoted by $\chi(G)$. $G$ is said to be {\em perfect} if $\chi(H)=\omega(H)$ for all induced subgraphs $H$ of $G$. Given a graph $G$, a set $X \subseteq V_G$, and a vertex $v \in V_G \smallsetminus X$, we say that $v$ is {\em complete} to $X$ if $v$ is adjacent to every vertex of $X$, and we say that $v$ is {\em anti-complete} to $X$ if $v$ is non-adjacent to every vertex of $X$; $v$ is said to be {\em mixed} on $X$ if $v$ is neither complete nor anti-complete to $X$. $X$ is said to be a {\em homogeneous set} in $G$ if no vertex in $V_G \smallsetminus X$ is mixed on $X$. A homogeneous set $X$ in a graph $G$ is said to be {\em proper} if $2 \leq |X| \leq |V_G|-1$. $G$ is said to be {\em prime} if it does not contain a proper homogeneous set. 
\\
\\
We denote by $C_5$ the cycle on five vertices; this graph is also called a {\em pentagon}. The following result about the structure of $\{P_5,\overline{P_5}\}$-free graphs was proven by Fouquet in \cite{Fouquet}. 
\begin{theorem} \cite{Fouquet} \label{Fouquet} 
For each $\{P_5,\overline{P_5}\}$-free graph G at least one of the following 
holds:
\begin{itemize} 
\item $G$ contains a proper homogeneous set; 
\item $G$ is isomorphic to $C_5$; 
\item $G$ is $C_5$-free. 
\end{itemize} 
\end{theorem}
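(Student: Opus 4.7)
The plan is to analyze how each vertex outside a fixed induced $C_5$ attaches to it, define a seven-part partition of $V_G$ from these attachments, and show that either some part is a proper homogeneous set or the $C_5$ itself is. Let $G$ be $\{P_5,\overline{P_5}\}$-free, let $C = \{v_1,\ldots,v_5\}$ induce a $C_5$ in $G$ with edges $v_iv_{i+1}$ (indices mod $5$), and assume $V_G \neq C$. A finite case check shows that for every $u \in V_G \smallsetminus C$, the set $N(u) \cap C$ is one of the following: $\emptyset$, a ``non-adjacent pair'' $\{v_i,v_{i+2}\}$, a ``consecutive triple'' $\{v_{i-1},v_i,v_{i+1}\}$, or all of $C$. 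Every other subset $S \subseteq C$ is ruled out by exhibiting an induced $P_5$ or $\overline{P_5}$ on $u$ and four vertices of $C$: for instance $N(u) \cap C = \{v_i\}$ yields an induced $P_5$ on $\{u,v_i,v_{i-1},v_{i-2},v_{i-3}\}$, while $N(u) \cap C = \{v_i,v_{i+1},v_{i+3}\}$ yields an induced $\overline{P_5}$ on $\{u,v_i,v_{i+1},v_{i+3},v_{i+4}\}$.

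Next, for each $i$ define
\[
W_i = \{v_i\} \cup \{u \in V_G \smallsetminus C : N(u) \cap C \in \{\{v_{i-1},v_{i+1}\}, \{v_{i-1},v_i,v_{i+1}\}\}\},
\]
and let $A$ (resp.\ $B$) collect the vertices outside $C$ with $N(u) \cap C = \emptyset$ (resp.\ $= C$). These seven sets partition $V_G$. I would then prove four uniformity claims: (i) $W_i$ is complete to $W_{i+1}$; (ii) $W_i$ is anti-complete to $W_{i+2}$; (iii) $A$ is anti-complete to each $W_i$; (iv) $B$ is complete to each $W_i$. The main step is (i): a hypothetical non-edge $uw$ with $u \in W_i \smallsetminus C$ and $w \in W_{i+1} \smallsetminus C$ makes $\{u, v_{i-1}, v_{i-2}, v_{i+2}, w\}$ induce a $P_5$ with edges $uv_{i-1}$, $v_{i-1}v_{i-2}$, $v_{i-2}v_{i+2}$, $v_{i+2}w$; the same path works in all four combinations of patterns for $u$ and $w$, since only their adjacency to $v_{i-1}$ and $v_{i+2}$ matters, and all required non-edges come from the distance-$2$ non-edges of $C_5$ together with the pattern restrictions. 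Claim (iii) is analogous: an adjacency $au$ would produce an induced $P_5$ of the form $a, u, v_{i+1}, v_{i+2}, v_{i+3}$. Claims (ii) and (iv) follow by applying (i) and (iii) to $\overline{G}$, using that $\overline{C_5} \cong C_5$ with cyclic order $v_1, v_3, v_5, v_2, v_4$ and that complementation interchanges the ``non-adjacent pair'' and ``consecutive triple'' patterns; under this correspondence, ``$W_i$ vs.\ $W_{i+2}$ in $G$'' becomes ``adjacent $W$'s in $\overline{G}$''.

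These uniformities make each $W_i$ a homogeneous set with $1 \leq |W_i| \leq |V_G| - 4$. If some $|W_i| \geq 2$, then $W_i$ is a proper homogeneous set and we are done. Otherwise every $W_i = \{v_i\}$, so $V_G = C \cup A \cup B$ with $A \cup B \neq \emptyset$; since each vertex of $A \cup B$ is uniformly anti-complete or complete to $C$, the set $C$ itself is a proper homogeneous set of size $5$.

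The main obstacle is the finite case analysis: enumerating which patterns $N(u) \cap C$ are forbidden, and verifying the $P_5$-constructions for (i) and (iii) in all pattern-subcases. The complementation symmetry meaningfully halves the work on uniformity, but one must carefully track that the cyclic order of $C_5$ inside $\overline{G}$ differs from that inside $G$, so ``adjacent $W$'s in $\overline{G}$'' really do correspond to ``non-adjacent $W$'s in $G$''.
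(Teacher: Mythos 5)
The paper does not prove this statement --- it is quoted from Fouquet's paper \cite{Fouquet} and used as a black box --- so there is no in-paper proof to compare against. Judged on its own merits, your argument is correct and is essentially the classical one: fix an induced $C_5$ (if none exists, or if it is all of $G$, the second or third outcome holds), classify attachments, and extract a proper homogeneous set. I checked the key points: the four admissible patterns for $N(u)\cap C$ are exactly right (the forbidden patterns come in two complementation-classes, represented by your size-one and $\{v_i,v_{i+1},v_{i+3}\}$ examples, the latter indeed giving a house on $\{u,v_i,v_{i+1},v_{i+3},v_{i+4}\}$); the path $u-v_{i-1}-v_{i-2}-v_{i+2}-v_{i+2}$'s-neighbor $w$ in claim (i) is induced in all four pattern combinations because $v_i$ and $v_{i+1}$ do not appear on it; the passage to $\overline{G}$ for (ii) and (iv) is legitimate since the complement five-cycle has order $v_1,v_3,v_5,v_2,v_4$, under which $W_i$ and $W_{i+2}$ become consecutive and the pair/triple patterns swap; and the endgame (either some $|W_i|\geq 2$ gives a proper homogeneous set with at least four vertices outside it, or $C$ itself is one since $V_G\neq C$) is airtight. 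The only work you defer is the routine finite verification of the remaining forbidden patterns and the pattern subcases of (i) and (iii), which you correctly identify and which present no difficulty.
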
 
\noindent 
\ref{Fouquet} immediately implies that every $\{P_5,\overline{P_5},C_5\}$-free graph can be obtained by ``substitution'' starting from $\{P_5,\overline{P_5},C_5\}$-free graphs and pentagons (substitution is a well-known operation whose precise definition we give in section \ref{section:def}). Furthermore, it is easy to check that every graph obtained by substitution starting from $\{P_5,\overline{P_5},C_5\}$-free graphs and pentagons is $\{P_5,\overline{P_5}\}$-free. We remark that the Strong Perfect Graph Theorem \cite{SPGT} implies that a $\{P_5,\overline{P_5}\}$-free graph is perfect if and only if it is $C_5$-free. Thus, every $\{P_5,\overline{P_5}\}$-free graph can be obtained by substitution starting from $\{P_5,\overline{P_5}\}$-free perfect graphs and pentagons. In view of this, the bulk of this paper focuses on $\{P_5,\overline{P_5},C_5\}$-free graphs (equivalently: $\{P_5,\overline{P_5}\}$-free perfect graphs). 
\\
\\
It is easy to check that all split graphs are $\{P_5,\overline{P_5},C_5\}$-free. Our first result is a decomposition theorem (\ref{skew-partition-two-nontrivial}), which states that every prime $\{P_5,\overline{P_5},C_5\}$-free graph that is not split admits a particular kind of ``skew-partition.'' Skew-partitions were first introduced by Chv{\'a}tal \cite{chvatal}, and they played an important role in the proof of the Strong Perfect Graph Theorem \cite{SPGT}; we give the precise definition in section \ref{section:skew-partition}. Our second result is another decomposition theorem (\ref{decomposition}), which states that every prime $\{P_5,\overline{P_5},C_5\}$-free graph that is not split admits a new kind of decomposition, which we call a ``split graph divide'' (see section \ref{section:split-graph-divide} for the definition). Next, we reverse the split graph divide decomposition to turn it into a composition that preserves the property of being $\{P_5,\overline{P_5},C_5\}$-free. We call this composition ``split graph unification'' (see section \ref{section:split-graph-unification} for the definition). Finally, combining our results with \ref{Fouquet}, we prove that every $\{P_5,\overline{P_5}\}$-free graph is obtained by repeatedly applying substitution, split graph unification, and split graph unification in the complement starting from split graphs and pentagons (see \ref{P5-P5c} and \ref{cor}). 
\\
\\
This paper is organized as follows. Section \ref{section:def} contains definitions that we use in the remainder of the paper. Section \ref{section:skew-partition} is devoted, first, to proving that every prime $\{P_5,\overline{P_5},C_5\}$-free graph that is not split admits a skew-partition of a certain kind (see \ref{skew-partition-two-nontrivial}), and then to further analyzing skew-partitions in prime $\{P_5,\overline{P_5},C_5\}$-free graphs. The final result of section \ref{section:skew-partition} (see \ref{skew-partition}) is used in section \ref{section:split-graph-divide}. However, a number of lemmas from section \ref{section:skew-partition} (in particular \ref{preliminary-decomposition}, \ref{not-mixed-on-other-half}, and \ref{skew-partition analysis}) that are used to prove \ref{skew-partition} may also be of independent interest as theorems about skew-partitions in $\{P_5,\overline{P_5},C_5\}$-free graphs. Section \ref{section:split-graph-divide} deals with split graph divides, and section \ref{section:split-graph-unification} with split graph unifications. Finally, in section \ref{section:main-thm}, we prove the main theorem of this paper.

\section{Definitions} \label{section:def} 

Given a graph $G$ and a vertex $v \in V_G$, we denote by $\Gamma_G(v)$ the set of all neighbors of $v$ in $G$. (Thus, $v \notin \Gamma_G(v)$.) The {\em degree} of $v$ in $G$ is $|\Gamma_G(v)|$, that is, the number of neighbors that $v$ has in $G$. 
\\
\\
A graph is {\em non-trivial} if it contains at least two vertices. A graph $H$ is said to be {\em smaller} than a graph $G$ provided that $H$ has strictly fewer vertices than $G$. $G$ is {\em bigger} than $H$ provided that $H$ is smaller than $G$. 
\\
\\
Given graphs $G_1$ and $G_2$ with disjoint vertex-sets, and a vertex $u \in V_{G_2}$, we say that a graph $G$ is obtained by {\em substituting $G_1$ for $u$ in $G_2$} provided that the following hold: 
\begin{itemize} 
\item $V_G = V_{G_1} \cup (V_{G_2} \smallsetminus \{u\})$; 
\item $G[V_{G_1}] = G_1$; 
\item $G[V_{G_2} \smallsetminus \{u\}] = G_2 \smallsetminus u$; 
\item for all $v \in V_{G_2} \smallsetminus \{u\}$, if $v$ is adjacent to $u$ in $G_2$, then $v$ is complete to $V_{G_1}$ in $G$, and if $v$ is non-adjacent to $u$ in $G_2$, then $v$ is anti-complete to $V_{G_1}$ in $G$. 
\end{itemize} 
\noindent 
We remark that under these circumstances, $V_{G_1}$ is a homogeneous set in $G$, and the homogeneous set $V_{G_1}$ in $G$ is proper if and only if $G_1$ and $G_2$ both have at least two vertices (equivalently: if $G_1$ and $G_2$ are both smaller than $G$). Thus, a graph $G$ is obtained by substitution from smaller graphs if and only if $G$ contains a proper homogeneous set. 
\\
\\
Given a graph $G$ and disjoint sets $A,B \subseteq V_G$, we say that $A$ is {\em complete} to $B$ provided that every vertex in $A$ is complete to $B$, and we say that $A$ is {\em anti-complete} to $B$ provided every vertex in $A$ is anti-complete to $B$. 
\\
\\
We often denote a path by $p_0-...-p_n$; this means that $p_0,...,p_n$ are the vertices of the path, and that for all distinct $i,j \in \{0,...,n\}$, $p_i$ is adjacent to $p_j$ if and only if $|i-j| = 1$. A path on $n+1$ vertices and $n$ edges is denoted by $P_{n+1}$; thus, $P_{n+1}$ is an $n$-edge path. The {\em length} of a path is the number of edges that it contains; thus, the length of $P_{n+1}$ is $n$. We remind the reader that a house is a the complement of a four-edge path. We often denote a house by $p_0-p_1-p_2-p_3-p_4$; this means that $p_0,p_1,p_2,p_3,p_4$ are the vertices of the house, and that for all distinct $i,j \in \{0,1,2,3,4\}$, $p_i$ and $p_j$ are non-adjacent if and only if $|i-j| = 1$. 
\\
\\
We often denote a cycle by $c_0-c_1-...-c_{n-1}-c_0$; this means that $c_0,c_1,...,c_{n-1}$ are the vertices of the cycle, and that for all distinct $i,j \in \{0,...,n-1\}$, $c_i$ and $c_j$ are adjacent if and only if $|i-j|=1$ or $n-1$. A cycle on $n$ vertices and $n$ edges is denoted by $C_n$. The {\em length} of a cycle is the number of edges (equivalently: the number of vertices) that it contains. A {\em triangle} is a cycle of length three, a {\em square} is a cycle of length four, and a {\em pentagon} is a cycle of length five. 
\\
\\
A graph $G$ is {\em connected} if $V_G$ cannot be partitioned into two non-empty sets that are anti-complete to each other. A graph $G$ is {\em anti-connected} if $\overline{G}$ is connected. A {\em component} of a non-null graph $G$ is a maximal connected induced subgraph of $G$, and an {\em anti-component} of $G$ is a maximal anti-connected induced subgraph of $G$. A component or an anti-component of a graph is {\em non-trivial} if it contains at least two vertices. We remark that the vertex-sets of the components of a graph are anti-complete to each other, and that the vertex-sets of the anti-components of a graph are complete to each other.

\section{Skew-partition decomposition} \label{section:skew-partition} 

Given a graph $G$ and sets $X,Y \subseteq V_G$, we say that $(X,Y)$ is a {\em skew-partition} of $G$ provided that $V_G = X \cup Y$, $X$ and $Y$ are non-empty and disjoint, $G[X]$ is not connected, and $G[Y]$ is not anti-connected. We say that $G$ {\em admits a skew-partition} provided that there exist sets $X,Y \subseteq V_G$ such that $(X,Y)$ is a skew-partition of $G$. Note that if $(X,Y)$ is a skew-partition of $G$, then $(Y,X)$ is a skew-partition of $\overline{G}$, and consequently, $G$ admits a skew-partition if and only if $\overline{G}$ does. 
\\
\\
This section is devoted to analyzing skew-partitions in $\{P_5,\overline{P_5},C_5\}$-free graphs. It is organized as follows. We consider prime $\{P_5,\overline{P_5},C_5\}$-free graphs that are not split. A result from \cite{MariaPeter} (see \ref{MariaPeter} below) guarantees that all such graphs contain a certain induced subgraph that ``interacts'' with the rest of the graph in a certain useful way. We use this result to show that every prime $\{P_5,\overline{P_5},C_5\}$-free graph $G$ that is not split admits a skew-partition $(X,Y)$ such that either $G[X]$ contains at least two non-trivial components, or $G[Y]$ contains at least two non-trivial anti-components (see \ref{skew-partition-two-nontrivial}). The remainder of the section is devoted to proving a series of lemmas about skew-partitions of this kind in $\{P_5,\overline{P_5},C_5\}$-free graphs. The final result of this section (\ref{skew-partition}) is used in section \ref{section:split-graph-divide} to construct ``split graph divides.'' 
\\
\\
We begin with a couple of technical lemmas (\ref{mixed} and \ref{not-mixed-on-two}). 
\begin{theorem} \label{mixed} Let $G$ be a graph, let $X \subseteq V_G$, and let $v \in V_G \smallsetminus X$ be a vertex that is mixed on $X$ in $G$. Then the following hold: 
\begin{itemize}
\item if $G[X]$ is connected, then there exist adjacent $x,x' \in X$ such that $v$ is adjacent to $x$ and non-adjacent to $x'$; 
\item if $G[X]$ is anti-connected, then there exist non-adjacent $x,x' \in X$ such that $v$ is adjacent to $x$ and non-adjacent to $x'$.
\end{itemize}
\end{theorem}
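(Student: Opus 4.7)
The plan is to prove the first bullet directly by a path-traversal argument, and then derive the second bullet by complementation.

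For the first bullet, since $v$ is mixed on $X$, there exist $y, z \in X$ such that $v$ is adjacent to $y$ and non-adjacent to $z$. Because $G[X]$ is connected, there is a path $y = x_0 - x_1 - \dots - x_k = z$ with all $x_i \in X$. Consider the sequence of adjacency indicators of $v$ with $x_0, x_1, \dots, x_k$: it starts with ``adjacent'' and ends with ``non-adjacent,'' so there must be some index $i \in \{0, \dots, k-1\}$ at which the indicator flips, i.e., $v$ is adjacent to $x_i$ and non-adjacent to $x_{i+1}$. Since $x_i$ and $x_{i+1}$ are consecutive on the path, they are adjacent in $G[X]$, and setting $x = x_i$, $x' = x_{i+1}$ gives the desired pair.

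For the second bullet, I would pass to the complement. Observe first that being ``mixed on $X$'' is invariant under complementation: if $v$ is mixed on $X$ in $G$, then $v$ has both a neighbor and a non-neighbor in $X$ in $G$, so the same holds in $\overline{G}$ (with the roles swapped), and hence $v$ is mixed on $X$ in $\overline{G}$. Moreover, $\overline{G}[X] = \overline{G[X]}$ is connected, since $G[X]$ is anti-connected. Applying the first bullet to $\overline{G}$ yields adjacent (in $\overline{G}$) vertices $x, x' \in X$ such that $v$ is adjacent to $x$ and non-adjacent to $x'$ in $\overline{G}$. Translating back to $G$: $x$ and $x'$ are non-adjacent in $G$, while $v$ is non-adjacent to $x$ and adjacent to $x'$ in $G$. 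Relabeling (swapping $x$ and $x'$) gives non-adjacent $x, x' \in X$ with $v$ adjacent to $x$ and non-adjacent to $x'$, as required.

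There is no real obstacle here; the lemma is essentially the observation that a path from a neighbor of $v$ to a non-neighbor of $v$ must contain a ``switching edge,'' together with the standard self-duality between connectivity/anti-connectivity under complementation. The only thing to be careful about is the symmetric definition of ``mixed'' so that the complementation step in the second bullet goes through cleanly.
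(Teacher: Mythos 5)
Your proposal is correct and is essentially the paper's own argument: the paper partitions $X$ into $X \cap \Gamma_G(v)$ and $X \smallsetminus \Gamma_G(v)$ and uses connectedness of $G[X]$ to find an edge between the two parts, which is exactly the ``switching edge'' your path-traversal produces, and the second bullet is likewise obtained by passing to the complement. No issues.
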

\begin{proof}
It suffices to prove the first statement, for then the second will follow by an analogous argument applied to $\overline{G}$. So suppose that $G[X]$ is connected. Since $v$ is mixed on $X$, both $X \cap \Gamma_G(v)$ and $X \smallsetminus \Gamma_G(v)$ are non-empty. As $G[X]$ is connected, $X \cap \Gamma_G(v)$ is not anti-complete to $X \smallsetminus \Gamma_G(v)$ in $G$, and consequently, there exist adjacent verices $x \in X \cap \Gamma_G(v)$ and $x' \in X \smallsetminus \Gamma_G(v)$. This completes the argument. 
\end{proof}
\begin{theorem} \label{not-mixed-on-two} Let $G$ be a $\{P_5,\overline{P_5}\}$-free graph, and let $(X,Y)$ be a skew-partition of $G$. Then: 
\begin{itemize} 
\item no vertex in $X$ is mixed on more than one anti-component of $G[Y]$; 
\item no vertex in $Y$ is mixed on more than one component of $G[X]$. 
\end{itemize} 
\end{theorem}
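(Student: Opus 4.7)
The two bullets are interchanged by passing to the complement: since $(X,Y)$ is a skew-partition of $G$ exactly when $(Y,X)$ is a skew-partition of $\overline{G}$, and since the anti-components of $\overline{G}[X]$ are precisely the components of $G[X]$, the second bullet follows from the first applied to $\overline{G}$ (which is also $\{P_5,\overline{P_5}\}$-free). So it suffices to prove the first.

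Suppose, for contradiction, that some $v \in X$ is mixed on two distinct anti-components $Y_1, Y_2$ of $G[Y]$. Each $G[Y_i]$ is anti-connected, so the anti-connected clause of \ref{mixed} yields, for each $i \in \{1,2\}$, non-adjacent vertices $y_i, y_i' \in Y_i$ such that $v$ is adjacent to $y_i$ and non-adjacent to $y_i'$. Since $Y_1$ and $Y_2$ are vertex-sets of distinct anti-components of $G[Y]$, the set $\{y_1, y_1'\}$ is complete to $\{y_2, y_2'\}$ in $G$.

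I would then examine $H := G[\{v, y_1, y_1', y_2, y_2'\}]$. Its edges are $vy_1$, $vy_2$, and the four edges between $\{y_1, y_1'\}$ and $\{y_2, y_2'\}$; its only non-edges are $vy_1'$, $vy_2'$, $y_1y_1'$, and $y_2y_2'$. These four non-edges form the path $y_1-y_1'-v-y_2'-y_2$ in $\overline{H}$, so $\overline{H} \cong P_5$, i.e.\ $H$ is an induced house in $G$, contradicting $\overline{P_5}$-freeness. There is no genuine obstacle here: the argument reduces to correctly assembling the five vertices, and the completeness between distinct anti-components of $G[Y]$ is precisely what forces the four cross-edges needed to turn the configuration into a $\overline{P_5}$.
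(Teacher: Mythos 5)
Your proof is correct and is essentially the paper's argument viewed in the complement: the paper proves the component statement directly by producing the induced path $x_1'-x_1-y-x_2-x_2'$ and derives the anti-component statement by complementation, whereas you prove the anti-component statement directly by producing the induced house on $\{v,y_1,y_1',y_2,y_2'\}$ and complement for the other bullet. Both rest on the same two ingredients, namely \ref{mixed} and the completeness (respectively anti-completeness) between distinct anti-components (respectively components), so there is no substantive difference.
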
 
\begin{proof} 
It suffices to prove the second statement, for the first will then follow by an analogous argument applied to $\overline{G}$. Suppose that $X_1$ and $X_2$ are the vertex-sets of distinct components of $G[X]$, and that some $y \in Y$ is mixed on both $X_1$ and $X_2$ in $G$. By \ref{mixed}, there exist adjacent $x_1,x_1' \in X_1$ such that $y$ is adjacent to $x_1$ and non-adjacent to $x_1'$. Similarly, there exist adjacent $x_2,x_2' \in X_2$ such that $y$ is adjacent to $x_2$ and non-adjacent to $x_2'$. But now $x_1'-x_1-y-x_2-x_2'$ is an induced four-edge path in $G$, which contradicts the assumption that $G$ is $P_5$-free. 
\end{proof} 
\noindent 
We now need some definitions. Given a graph $G$ and a vertex $v \in V_G$, we say that $v$ is a {\em simplicial} vertex of $G$ provided that $\Gamma_G(v)$ is a clique, and we say that $v$ is an {\em anti-simplicial} vertex of $G$ provided that $v$ is a simplicial vertex of $\overline{G}$. In other words, $v$ is simplicial in $G$ provided that $v$'s neighbors in $G$ form a clique, and $v$ is anti-simplicial in $G$ provided that $v$'s non-neighbors in $G$ form a stable set. Last, $H_6$ is a graph with vertex-set $\{v_1,v_2,v_3,$ $v_4,v_5,v_6\}$ and edge-set $\{v_1v_2,v_2v_3,v_3v_4,v_2v_5,v_3v_6,v_5v_6\}$. 
\\
\\
In what follows, we use a theorem (stated below) proven in \cite{MariaPeter} by the first two authors of the present paper. 
\begin{theorem} \cite{MariaPeter} \label{MariaPeter} If $G$ is a prime $\{P_5,\overline{P_5},C_5\}$-free graph, then either $G$ is a split graph or at least one of $G$ and $\overline{G}$ contains an induced $H_6$ whose two vertices of degree one are simplicial, and at least one of whose vertices of degree three is anti-simplicial.
\end{theorem}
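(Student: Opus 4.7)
The plan is to find an induced diamond ($K_4$ minus an edge) inside $G$ or $\overline{G}$, extend it to an induced $H_6$ using primeness, and then use an extremal argument to secure the simpliciality conditions.

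\textbf{Seed and reduction.} By the F\"oldes--Hammer characterization, a graph is split if and only if it is $\{2K_2, C_4, C_5\}$-free. Since $G$ is $C_5$-free and not split, $G$ contains an induced $2K_2$ or an induced $C_4$; the hypothesis is invariant under complementation and the conclusion is a dichotomy between $G$ and $\overline{G}$, so I may replace $G$ by $\overline{G}$ if necessary and assume that $G$ contains an induced $C_4$, say $a_1 a_2 a_3 a_4 a_1$. Since $G$ is prime, $\{a_1,a_2,a_3,a_4\}$ is not homogeneous in $G$, so some vertex is mixed on it; by \ref{mixed} together with $\{P_5,\overline{P_5}\}$-freeness, its attachment pattern to the $C_4$ lies in a very short list, and in every case (possibly after iterating and relabelling) one recovers an induced diamond $\{v_2,v_5,v_6,v_3\}$ whose chord is $v_2 v_3$.

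\textbf{Extending to $H_6$.} Reapplying primeness to this diamond produces a vertex mixed on $\{v_2,v_3,v_5,v_6\}$; again the excluded induced subgraphs severely restrict the possible attachments. Enumerating cases, one locates a vertex $v_1$ adjacent in $G$ only to $v_2$ among the diamond vertices, and a vertex $v_4$ adjacent only to $v_3$. The six vertices $v_1,\dots,v_6$ then induce $H_6$.

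\textbf{Upgrading to simpliciality.} Among all induced copies of $H_6$ inside $G$ (and inside $\overline{G}$), I would fix one that is extremal with respect to a lexicographic measure that penalises a non-edge inside $\Gamma_G(v_1)$ or $\Gamma_G(v_4)$ and rewards a stable set as the non-neighborhood of $v_2$ or $v_3$. Any witnessing defect --- a non-edge $xy$ in $\Gamma_G(v_1)$, or an edge among the non-neighbors of $v_2$, etc.\ --- can be combined with the surrounding $H_6$ vertices to construct a single-vertex swap producing another induced $H_6$. This swap either strictly decreases the measure (contradicting minimality) or else exhibits one of the forbidden induced subgraphs $P_5$, $\overline{P_5}$, $C_5$. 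If no anti-simplicial centre can be secured in $G$, the symmetric argument run inside $\overline{G}$ completes the dichotomy.

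The main obstacle is this final extremal step. The three required conditions are not independent: naive one-vertex improvements tend to repair one defect while opening another. The real work lies in designing the measure (or, equivalently, a carefully staged sequence of extremal choices) so that every forbidden swap is blocked by exactly one of $P_5$, $\overline{P_5}$, $C_5$, and in verifying that the ``pass to $\overline{G}$'' alternative absorbs precisely the residual case in which no $G$-sided $H_6$ carries an anti-simplicial centre.
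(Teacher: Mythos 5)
You should first be aware that \ref{MariaPeter} is not proved in this paper at all: it is imported verbatim from \cite{MariaPeter}, so there is no in-paper argument to measure your proposal against, and it has to stand on its own. As written, it does not. There is a concrete structural error near the start: in $H_6$ as defined here, with edge-set $\{v_1v_2,v_2v_3,v_3v_4,v_2v_5,v_3v_6,v_5v_6\}$, the set $\{v_2,v_3,v_5,v_6\}$ induces a \emph{square} $v_2-v_5-v_6-v_3-v_2$ (both $v_2v_6$ and $v_3v_5$ are non-edges), not a diamond with chord $v_2v_3$. So the intermediate configuration you propose to grow is not the one actually sitting inside $H_6$, and the case analysis you gesture at (``enumerating cases, one locates\dots'') is aimed at the wrong target. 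More importantly, every step carrying real content is left unexecuted: the classification of attachments to the $C_4$, the extension to $H_6$, and above all the extremal argument that must deliver the \emph{global} conditions that $v_1$ and $v_4$ are simplicial in all of $G$ and that $v_2$ or $v_3$ is anti-simplicial in all of $G$. You yourself identify this last step as ``the main obstacle'' and concede that the measure is undesigned and that naive one-vertex swaps repair one defect while opening another. That step is essentially the entire theorem; a proof that defers it is a plan, not a proof.

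The closing appeal to symmetry also conceals a gap. If the extremal argument in $G$ yields an $H_6$ with simplicial degree-one vertices but no anti-simplicial degree-three vertex, nothing in your outline explains why the same argument run in $\overline{G}$ must then succeed: a priori both could fail in the same way, and the theorem's disjunction (``at least one of $G$ and $\overline{G}$\dots'') is precisely the nontrivial assertion that they cannot. Establishing this requires an explicit argument linking a failed configuration in $G$ to a successful one in $\overline{G}$, or a direct contradiction with primeness and $\{P_5,\overline{P_5},C_5\}$-freeness, and no such argument is sketched. The F\"oldes--Hammer seed and the use of primeness to produce mixed vertices are sound opening moves, but everything after them still needs to be written down and verified.
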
 
\noindent 
Given a graph $G$, an induced subgraph $H$ of $G$, and vertices $u \in V_H$ and $u' \in V_G \smallsetminus V_H$, we say that $u'$ is a {\em clone} of $u$ with respect to $H$ in $G$ provided that for all $v \in V_H \smallsetminus \{u\}$, $u'$ is adjacent to $v$ if and only if $u$ is adjacent to $v$. We now prove a lemma (see \ref{three-edge-path}) that describes how vertices in a $\{P_5,\overline{P_5},C_5\}$-free graph can ``attach'' to an induced three-edge path in that graph, and then we use this lemma, together with \ref{MariaPeter}, to show that every prime $\{P_5,\overline{P_5},C_5\}$-free graph that is not split admits a certain kind of skew-partition (see \ref{skew-partition-two-nontrivial}). 

\begin{theorem} \label{three-edge-path} Let $G$ be a $\{P_5,\overline{P_5},C_5\}$-free graph, and let $a-b-c-d$ be an induced three-edge path in $G$. For each $x \in \{a,b,c,d\}$, let $C_x$ be the set of all clones of $x$ with respect to $a-b-c-d$ in $G$. Let $A$ be the set of all vertices in $G$ that are anti-complete to $\{a,b,c,d\}$, let $B$ be the set of all vertices in $G$ that are complete to $\{b,c\}$ and anti-complete to $\{a,d\}$, and let $C$ be the set of vertices in $G$ that are complete to $\{a,b,c,d\}$. Then $V_G = \{a,b,c,d\} \cup C_a \cup C_b \cup C_c \cup C_d \cup A \cup B \cup C$. 
\end{theorem}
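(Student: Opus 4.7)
The plan is a pure case analysis on the adjacency pattern of an arbitrary vertex $v \in V_G \smallsetminus \{a,b,c,d\}$ to the four path vertices. There are $2^4=16$ possible patterns, which I would record as vectors $(n_a,n_b,n_c,n_d)\in\{0,1\}^4$. Unpacking the definitions shows that the seven target sets already absorb exactly eleven of the sixteen patterns: $A$ gives $(0,0,0,0)$; $B$ gives $(0,1,1,0)$; $C$ gives $(1,1,1,1)$; $C_a$ gives $(\ast,1,0,0)$; $C_b$ gives $(1,\ast,1,0)$; $C_c$ gives $(0,1,\ast,1)$; and $C_d$ gives $(0,0,1,\ast)$. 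So the entire task reduces to showing that $v$ cannot realize any of the remaining five patterns, namely the ones in which $\Gamma_G(v)\cap\{a,b,c,d\}$ equals $\{a\}$, $\{d\}$, $\{a,d\}$, $\{a,c,d\}$, or $\{a,b,d\}$.

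The reflection $a\leftrightarrow d$, $b\leftrightarrow c$ is an automorphism of the induced path $a-b-c-d$, so I would use it to trim these five cases to three: $\{d\}$, $\{a,d\}$, and $\{a,c,d\}$. If $\Gamma_G(v)\cap\{a,b,c,d\}=\{d\}$, then $v-d-c-b-a$ is an induced $P_5$, contradicting $P_5$-freeness. If $\Gamma_G(v)\cap\{a,b,c,d\}=\{a,d\}$, then $v-a-b-c-d-v$ is an induced $C_5$ (all required non-edges are either the path non-edges $ac,ad,bd$ or the non-adjacencies $vb,vc$), contradicting $C_5$-freeness. If $\Gamma_G(v)\cap\{a,b,c,d\}=\{a,c,d\}$, then the induced subgraph $G[v,a,b,c,d]$ has exactly the four non-edges $vb$, $ac$, $ad$, $bd$, and these non-edges form the five-vertex path $c-a-d-b-v$; hence that induced subgraph is isomorphic to $\overline{P_5}$, contradicting $\overline{P_5}$-freeness.

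The argument is really just bookkeeping; no single step is hard. The main thing to be careful about is the enumeration: one must confirm that exactly eleven of the sixteen adjacency patterns are absorbed by $\{a,b,c,d\}\cup C_a\cup C_b\cup C_c\cup C_d\cup A\cup B\cup C$, so that the five remaining "bad" patterns are identified without omission. Beyond that, the only case that requires a second glance is the last one, where one has to recognize that a subgraph on five vertices with the right four non-edges is indeed a house, which is cleanest to verify by checking that the non-edges form an induced $P_5$.
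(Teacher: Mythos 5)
Your proof is correct and is essentially the same argument as the paper's: a case analysis on $\Gamma_G(v)\cap\{a,b,c,d\}$ that identifies the five forbidden adjacency patterns and kills each with one of the excluded induced subgraphs. The only difference is cosmetic — the paper halves the casework using complementation (so it only needs $P_5$ and $C_5$ directly), while you use the path-reversal symmetry and invoke $\overline{P_5}$ explicitly for the pattern $\{a,c,d\}$.
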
 
\begin{proof} 
Let $x \in V_G$. We need to show that $x \in \{a,b,c,d\} \cup C_a \cup C_b \cup C_c \cup C_d \cup A \cup B \cup C$. If $x \in \{a,b,c,d\}$, then we are done; so assume that $x \in V_G \smallsetminus \{a,b,c,d\}$. We remark that both the premises and the conclusion of \ref{three-edge-path} are complement-invariant (we are using the fact that the complement of a three edge-path is again a three-edge path). Thus, passing to the complement of $G$ if necessary, we may assume that $x$ has at most two neighbors in $\{a,b,c,d\}$. If $x$ is anti-complete to $\{a,b,c,d\}$, then $x \in A$, and we are done. So assume that $x$ has a neighbor in $\{a,b,c,d\}$. 
\\
\\
Suppose first that $x$ has a neighbor in $\{a,d\}$; by symmetry, we may assume that $x$ is adjacent to $a$. Since $x-a-b-c-d$ is not an induced four-edge path in $G$, it follows that $x$ has a (unique) neighbor in $\{b,c,d\}$. Since $x-a-b-c-d-x$ is not an induced pentagon in $G$, $x$ is non-adjacent to $d$. But now, if $x$ is adjacent to $b$, then $x \in C_a$; and if $x$ is adjacent to $c$, then $x \in C_b$. In either case, we are done.
\\
\\
Suppose now that $x$ is anti-complete to $\{a,d\}$. If $x$ is adjacent to exactly one of $b$ and $c$, then $x \in C_a \cup C_d$; and if $x$ is complete to $\{b,c\}$, then $x \in B$. This completes the argument.
\end{proof} 

\begin{theorem} \label{skew-partition-two-nontrivial} Let $G$ be a prime $\{P_5,\overline{P_5},C_5\}$-free graph that is not split. Then $G$ admits a skew-partition $(X,Y)$ such that either $G[X]$ has at least two non-trivial components, or $G[Y]$ has at least two non-trivial anti-components. 
\end{theorem}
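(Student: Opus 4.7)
The plan is to combine \ref{MariaPeter} with the structural information provided by \ref{three-edge-path}, and then to carry out a case analysis on the anti-component structure of a carefully chosen closed neighborhood.

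By \ref{MariaPeter}, since $G$ is not split, at least one of $G$ and $\overline{G}$ contains an induced $H_6$ whose two degree-one vertices are simplicial and at least one of whose degree-three vertices is anti-simplicial. The conclusion of \ref{skew-partition-two-nontrivial} is preserved under complementation: a skew-partition $(X,Y)$ of $G$ is the skew-partition $(Y,X)$ of $\overline{G}$, with components and anti-components swapped, so the two alternatives in the conclusion are exchanged. Hence I may assume without loss of generality that $G$ itself contains such an $H_6$, with vertices $v_1,\dots,v_6$ labeled as in the definition of $H_6$, $v_1$ and $v_4$ simplicial, and (by the $v_2\leftrightarrow v_3$ symmetry of $H_6$) $v_2$ anti-simplicial. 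Let $S := V_G \setminus (\Gamma_G(v_2) \cup \{v_2\})$; by anti-simpliciality $S$ is a stable set, and since $v_4,v_6 \in S$, $|S|\geq 2$.

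My first candidate skew-partition is $(X,Y) := (S,\, V_G \setminus S)$. This is indeed a skew-partition: $G[X]$ is edgeless and hence disconnected, while $v_2$ is complete to $Y \setminus \{v_2\} = \Gamma_G(v_2)$, so $\{v_2\}$ is an isolated vertex of $\overline{G[Y]}$, showing that $G[Y]$ is not anti-connected. The vertices $v_1,v_3,v_5 \in \Gamma_G(v_2)$ are pairwise non-adjacent in $G$ and so lie in a common non-trivial anti-component $N_1$ of $G[\Gamma_G(v_2)]$. If $G[\Gamma_G(v_2)]$ admits a non-trivial anti-component distinct from $N_1$, then $G[Y]$ has at least two non-trivial anti-components and we are done. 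Otherwise $U := \Gamma_G(v_2) \setminus N_1$ consists of vertices isolated in $\overline{G[\Gamma_G(v_2)]}$, so $U$ is a clique in $G$ complete to $N_1$. In this remaining case I would apply \ref{three-edge-path} to the induced three-edge path $v_1{-}v_2{-}v_3{-}v_4$ to classify all of $V_G$ by adjacency to $\{v_1,v_2,v_3,v_4\}$, and then construct an alternative skew-partition $(X',Y')$ in which $G[X']$ has two non-trivial components, using the simpliciality of $v_1$ and $v_4$ (which make $\Gamma_G(v_1)\cup\{v_1\}$ and $\Gamma_G(v_4)\cup\{v_4\}$ cliques in $G$), the non-split hypothesis (which prevents $N_1$ from also being a clique, since that would make $\Gamma_G(v_2)\cup\{v_2\}$ a clique and $G$ a split graph), \ref{mixed}, \ref{not-mixed-on-two}, and the primeness of $G$.

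The main obstacle is this alternative construction: producing $(X',Y')$ explicitly and verifying that the two candidate non-trivial components of $G[X']$ are genuinely distinct. This separation step requires a detailed case analysis via the clone classification in \ref{three-edge-path}, with the $\{P_5,\overline{P_5},C_5\}$-free hypothesis used to rule out induced paths in $X'$ linking the two sides, and primeness used to exclude homogeneous obstructions that would otherwise merge them.
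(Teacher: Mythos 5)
Your setup is fine: the reduction to the case where $G$ itself contains the $H_6$ (with $v_1,v_4$ simplicial and, by symmetry, $v_2$ anti-simplicial) is a legitimate use of complementation, and your first candidate $(S,\Gamma_G(v_2)\cup\{v_2\})$ with $S=V_G\smallsetminus(\Gamma_G(v_2)\cup\{v_2\})$ is indeed a skew-partition that settles the theorem whenever $G[\Gamma_G(v_2)]$ has a second non-trivial anti-component besides the one containing $\{v_1,v_3,v_5\}$. But the argument stops exactly where the real work begins. In the remaining case ($\Gamma_G(v_2)=N_1\cup U$ with $U$ a clique complete to $N_1$) you only announce that you ``would'' apply \ref{three-edge-path} to $v_1{-}v_2{-}v_3{-}v_4$ and build some $(X',Y')$, and you yourself flag the verification that the two candidate pieces of $G[X']$ are genuinely separated as ``the main obstacle.'' That obstacle \emph{is} the theorem: nothing in the proposal actually produces $(X',Y')$ or rules out paths joining the two sides, and the extra structure gained from your case split (a single non-trivial anti-component of $G[\Gamma_G(v_2)]$) is never exploited in any concrete way. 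As written, this is a plan, not a proof.

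For comparison, the paper's proof consists almost entirely of the construction you defer. Working in the complementary formulation (an induced path $a{-}b{-}c{-}d$ with $a$ simplicial, $b,c$ anti-simplicial, together with two clones $b'\in C_b$, $c'\in C_c$ arising from the degree-two vertices of the $H_6$), it classifies $V_G$ via \ref{three-edge-path}; proves that $A\cup C_a\cup C_d\cup\{a,d\}$ is stable (the stability of $C_a\cup C_d$ needs its own house/$P_5$ argument using $b'$ and $c'$); makes an extremal choice of $b_1\in C_b$ with fewest neighbors in $C_c$ and shows its neighborhood $N$ in $C_c$ is complete to $C_b$; sets $Y=C\cup N\cup(C_b\smallsetminus\{b_1\})\cup\{b,c\}$ and $X=V_G\smallsetminus Y$; and then shows there is no path in $G[X]$ between $\{a,b_1\}$ and $(C_c\smallsetminus N)\cup\{d\}$, with $C_c\smallsetminus N\neq\emptyset$ following from the minimality of $b_1$ and the existence of the clone $b'$ with non-neighbor $c'$. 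The extremal choice of $b_1$ and the essential role of the two degree-two vertices of the $H_6$ are absent from your sketch, so the missing step is not a routine verification but the substance of the argument.
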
 
\begin{proof} 
By \ref{MariaPeter}, we know that at least one of $G$ and $\overline{G}$ contains an induced $H_6$ whose two vertices of degree one are simplicial, and at least one of whose vertices of degree three is anti-simplicial. Our goal is to prove the following: 
\begin{itemize} 
\item[(a)] if $G$ contains an induced $H_6$ whose two vertices of degree one are simplicial, and at least one of whose vertices of degree three is anti-simplicial, then $G$ contains a skew-partition $(X,Y)$ such that $G[Y]$ contains at least two non-trivial anti-components; 
\item[(b)] if $\overline{G}$ contains an induced $H_6$ whose two vertices of degree one are simplicial, and at least one of whose vertices of degree three is anti-simplicial, then $G$ contains a skew-partition $(X,Y)$ such that $G[X]$ contains at least two non-trivial components. 
\end{itemize} 
Note that it suffices to prove (b), for (a) will then follow by an analogous argument applied to $\overline{G}$. So assume that $\overline{G}$ contains an induced $H_6$ whose two vertices of degree one are simplicial, and at least one of whose vertices of degree three is anti-simplicial. 
Then there exist pairwise distinct vertices $a,b,c,d,b',c' \in V_G$ such that: 
\begin{itemize} 
\item $a-b-c-d$ is an induced path in $G$; 
\item $b'c'$ is a non-edge in $G$; 
\item $b'$ is complete to $\{a,b,c\}$ and non-adjacent to $d$ in $G$; 
\item $c'$ is complete to $\{b,c,d\}$ and non-adjacent to $a$ in $G$; 
\item $a$ is simplicial in $G$; 
\item $b$ and $c$ are anti-simplicial in $G$. 
\end{itemize} 
\noindent 
Define sets $C_a,C_b,C_c,C_d,A,B,C$ as in \ref{three-edge-path}; by \ref{three-edge-path}, we know that $V_G = \{a,b,c,d\} \cup C_a \cup C_b \cup C_c \cup C_d \cup A \cup B \cup C$. We remark that $b' \in C_b$ and $c' \in C_c$. 
\\
\\
Since $a$ is simplicial and complete to $C \cup C_b \cup \{b\}$, we know that $C \cup C_b \cup \{b\}$ is a clique. Since $b$ is anti-simplicial and anti-complete to $A \cup C_d \cup \{d\}$, we know that $A \cup C_d \cup \{d\}$ is a stable set; and since $c$ is anti-simplicial and anti-complete to $A \cup C_a \cup \{a\}$, we know that $A \cup C_a \cup \{a\}$ is a stable set. 
\\
\\
Next, we claim that $C_a \cup C_d$ is stable. Suppose otherwise. Since $C_a$ and $C_d$ are stable, there exist adjacent $\hat{a} \in C_a$ and $\hat{d} \in C_d$. Since $b-\hat{d}-b'-\hat{a}-c$ is not an induced house in $G$, $b'$ has a neighbor in $\{\hat{a},\hat{d}\}$. Similarly, $c'$ has a neighbor in $\{\hat{a},\hat{d}\}$. Let $P$ be an induced path in $G[b',c',\hat{a},\hat{d}]$ between $b'$ and $c'$; since $b'c'$ is a non-edge in $G$, we know that $P$ contains at least two edges. But now since $C_a \cup \{a\}$ and $C_d \cup \{d\}$ are stable, it follows that $a-b'-P-c'-d$ is an induced path in $G$ of length at least four, contrary to the fact that $G$ is $P_5$-free. This proves that $C_a \cup C_d$ is stable.
\\
\\
We now know the following: 
\begin{itemize} 
\item $A \cup C_d \cup \{d\}$ is stable; 
\item $A \cup C_a \cup \{a\}$ is stable; 
\item $C_a \cup C_d$ is stable; 
\item $a$ is anti-complete to $C_d \cup \{d\}$; 
\item $d$ is anti-complete to $C_a \cup \{a\}$. 
\end{itemize} 
\noindent 
Consequently, $A \cup C_a \cup C_d \cup \{a,d\}$ is a stable set. 
\\
\\
Recall that $b' \in C_b$, and so $C_b \neq \emptyset$. Let $b_1$ be a vertex in $C_b$ with as few neighbors as possible in $C_c$; let $N$ be the set of all neighbors of $b_1$ in $C_c$. We claim that $N$ is complete to $C_b$. Suppose otherwise. Fix $b_2 \in C_b$ and $c_1 \in N$ such that $b_2c_1$ is a non-edge. By the minimality of $N$, there exists some $c_2 \in C_c \smallsetminus N$ such that $b_2c_2$ is an edge. Since $C_b$ is a clique, we know that either $c_1-b_1-b_2-c_2$ is an induced three-edge path in $G$, or $c_1-b_1-b_2-c_2-c_1$ is an induced square in $G$; in the former case, $d-c_1-b_1-b_2-c_2-d$ is an induced pentagon in $G$, and in the latter case, $c_2-b_1-d-b_2-c_1$ is an induced house in $G$. But neither outcome is possible since $G$ contains no induced pentagon and no induced house. This proves that $N$ is complete to $C_b$. 
\\
\\
Set $Y = C \cup N \cup (C_b \smallsetminus \{b_1\}) \cup \{b,c\}$. By definition, $b$ is complete to $C \cup C_c \cup \{c\}$; since $N \subseteq C_c$, it follows that $b$ is complete to $C \cup N \cup \{c\}$. Further, we showed above that $C_b \cup \{b\}$ is a clique; it follows that $b$ is complete to $C_b$, and consequently, to $C_b \smallsetminus \{b_1\}$. Thus, $b$ is complete to $C \cup N \cup (C_b \smallsetminus \{b_1\}) \cup \{c\} = Y \smallsetminus \{b\}$. Since $Y \smallsetminus \{b\} \neq \emptyset$ (because $c \in Y \smallsetminus \{b\}$), it follows that $Y$ is not anti-connected. 
\\
\\
Set $X = V_G \smallsetminus Y$. Then $X = A \cup B \cup C_a \cup C_d \cup (C_c \smallsetminus N) \cup \{a,d,b_1\}$. We showed above that $A \cup C_a \cup \{a\}$ is a stable set; thus, $a$ is anti-complete to $A \cup C_a$. Further, by construction, $a$ is anti-complete to $B \cup C_d \cup C_c \cup \{d\}$. It follows that $a$ is anti-complete to $X \smallsetminus \{a,b_1\}$; since $b_1$ is a clone of $b$ for $a-b-c-d$ in $G$, we know that $ab_1$ is an edge. Next, we showed above that $A \cup C_d \cup \{d\}$ is a stable set, and by the definition of $B$ and $C_a$, $d$ is anti-complete to $B \cup C_a$. Since $a-b-c-d$ is an induced path in $G$, and since $b_1 \in C_b$, we know that $d$ is anti-complete to $\{a,b_1\}$. Further, by construction, $d$ is complete to $C_c$, and therefore, to $C_c \smallsetminus N$. It follows that $d$ is complete to $C_c \smallsetminus N$ and anti-complete to $X \smallsetminus ((C_c \smallsetminus N) \cup \{d\})$ in $G$. 
\\
\\
Now, we claim that there is no path between $\{a,b_1\}$ and $(C_c \smallsetminus N) \cup \{d\}$ in $G[X]$. Suppose otherwise. Let $P$ be a path of minimum length between $\{a,b_1\}$ and $(C_c \smallsetminus N) \cup \{d\}$ in $G[X]$. Since $b_1$ is the only neighbor of $a$ in $G[X]$, and since all the neighbors of $d$ in $G[X]$ lie in $C_c \smallsetminus N$, it follows that the endpoints of this path are $b_1$ and some vertex $\hat{c} \in C_c \smallsetminus N$. Since $b_1$ is anti-complete to $C_c \smallsetminus N$, $P$ has at least two edges. But then $a-b_1-P-\hat{c}-d$ is an induced path in $G$ of length at least four, which is impossible since $G$ is $P_5$-free. Thus, there is no path between $\{a,b_1\}$ and $(C_c \smallsetminus N) \cup \{d\}$ in $G[X]$. It follows that $G[X]$ is disconnected, and consequently, that $(X,Y)$ is a skew-partition of $G$. 
\\
\\
It remains to show that $G[X]$ has at least two non-trivial components. Since $ab_1$ is an edge in $G$, $G[a,b_1]$ is connected, and since $d$ is complete to $C_c \smallsetminus N$ in $G$, $G[(C_c \smallsetminus N) \cup \{d\}]$ is connected. Let $X_1$ be the vertex-set of the component of $G[X]$ that contains $a$ and $b_1$, and let $X_2$ be the vertex-set of the component of $G[X]$ that includes $(C_c \smallsetminus N) \cup \{d\}$. Clearly, $|X_1| \geq 2$, and we just need to show that $|X_2| \geq 2$. It suffices to show that $C_c \smallsetminus N \neq \emptyset$. Suppose otherwise. Then $C_c = N$, and consequently, $b_1$ is complete to $C_c$. But $b' \in C_b$ and $b'$ has a non-neighbor (namely, $c'$) in $C_c$; consequently, $b'$ has fewer neighbors in $C_c$ than $b_1$ does, contrary to the choice of $b_1$. It follows that $|X_2| \geq 2$. This completes the argument. 
\end{proof} 
\noindent 
In the remainder of this section, we study skew-partitions of the kind that appears in \ref{skew-partition-two-nontrivial}. We start with a few definitions. Given a graph $G$ and a skew-partition $(X,Y)$ of $G$, a {\em decomposition of $(X,Y)$ in $G$} is an ordered six-tuple $(\{X_i\}_{i=1}^m,\{Y_j\}_{j=1}^n,S,K,\{S_j\}_{j=1}^n,\{K_i\}_{i=1}^m)$ such that: 
\begin{itemize} 
\item $X_1,...,X_m$ are the vertex-sets of the non-trivial components of $G[X]$; 
\item $Y_1,...,Y_n$ are the vertex-sets of the non-trivial anti-components of $G[Y]$; 
\item $S = X \smallsetminus (X_1 \cup ... \cup X_m)$; 
\item $K = Y \smallsetminus (Y_1 \cup ... \cup Y_n)$; 
\item for each $j \in \{1,...,n\}$, $S_j$ is the set of all vertices in $S$ that are mixed on $Y_j$; 
\item for each $i \in \{1,...,m\}$, $K_i$ is the set of all vertices in $K$ that are mixed on $X_i$. 
\end{itemize} 
\noindent 
Clearly, $X$ is the disjoint union of $X_1,...,X_m,S$; and $Y$ is the disjoint union of $Y_1,...,Y_n,K$. Further, $S$ is a (possibly empty) stable set, and $K$ is a (possibly empty) clique. We note that if $m = 0$, then $X = S$; similarly, if $n = 0$, then $Y = K$. 
\\
\\
We say that a skew-partition $(X,Y)$ of $G$ is {\em usable} provided that its associated partition $(\{X_i\}_{i=1}^m,\{Y_j\}_{j=1}^n,S,K,\{S_j\}_{j=1}^n,\{K_i\}_{i=1}^m)$ satisfies at least one of the following: 
\begin{itemize} 
\item[(a)] $m \geq 2$; the sets $S_1,...,S_n$ are pairwise disjoint; the sets $K_1,...,K_m$ are pairwise disjoint, and 
every vertex of $Y$ has a neighbor in each of $X_1,...,X_m$.
\item[(b)] $n \geq 2$; the sets $S_1,...,S_n$ are pairwise disjoint; the sets $K_1,...,K_m$ are pairwise disjoint, and 
every vertex in $X$ has a non-neighbor in each of $Y_1,...,Y_n$. 
\end{itemize} 
\noindent 
We say that $G$ {\em admits a usable skew-partition} provided that there exists a usable skew-partition $(X,Y)$ of $G$. Note that $G$ admits a usable skew-partition if and only if $\overline{G}$ does. Our next goal is to prove that every prime $\{P_5,\overline{P_5},C_5\}$-free graph that is not split admits a usable skew-partition (see \ref{preliminary-decomposition}). We first need a couple of lemmas (\ref{not-mixed-on-edge} and \ref{mixed-one-way}). The first of the two lemmas is used to prove the second, and the second is used in the proof of \ref{preliminary-decomposition}.

\begin{theorem} \label{not-mixed-on-edge} Let $G$ be a $\{P_5,\overline{P_5},C_5\}$-free graph, and let $X,Y \subseteq V_G$ be disjoint sets such that $G[X]$ is connected and $G[Y]$ is anti-connected. Let $v \in V_G \smallsetminus (X \cup Y)$ be complete to $Y$ and anti-complete to $X$. Then the following hold: 
\begin{itemize} 
\item if $y,y' \in Y$ are non-adjacent vertices such that some vertex in $X$ is adjacent to $y$ and non-adjacent to $y'$, then $y$ is complete to $X$ and $y'$ is anti-complete to $X$; 
\item if $x,x' \in X$ are adjacent vertices such that some vertex in $Y$ is adjacent to $x$ and non-adjacent to $x'$, then $x$ is complete to $Y$ and $x'$ is anti-complete to $Y$. 
\end{itemize} 
\end{theorem}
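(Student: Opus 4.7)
The plan is to prove the first bullet, and then deduce the second by applying the first to $\overline{G}$: complementation swaps the roles of $X$ and $Y$ (since ``connected'' and ``anti-connected'' exchange), swaps the two adjacency conditions on $v$, and turns a non-edge $yy'$ in $Y$ distinguished by some vertex of $X$ into an edge $xx'$ in $X$ distinguished by some vertex of $Y$. For the first bullet itself, I will show in order that $y$ is complete to $X$ and that $y'$ is anti-complete to $X$.

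The argument will rest on the following local analysis of any edge $pq$ in $X$ with $y$ adjacent to $p$ and $y$ non-adjacent to $q$. The induced subgraph on $\{v,y,y',p,q\}$ always carries the edges $vy, vy', yp, pq$ and the non-edges $yy', yq, vp, vq$, so the two remaining pairs $y'p, y'q$ split the analysis into four subcases. I expect three of them to produce a forbidden induced subgraph immediately: if $y'$ is anti-complete to $\{p,q\}$, then $y' - v - y - p - q$ is an induced $P_5$; if $y'$ is non-adjacent to $p$ but adjacent to $q$, then $v - y' - q - p - y - v$ is an induced $C_5$; and if $y'$ is complete to $\{p,q\}$, then the subgraph is a house, since its complement is the induced path $y' - y - q - v - p$. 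Only the ``safe'' subcase, in which $y'$ is adjacent to $p$ but not to $q$, produces no forbidden subgraph on just five vertices.

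To prove the first claim (that $y$ is complete to $X$), I will set $A = X \cap \Gamma_G(y)$, $B = X \smallsetminus A$, and suppose for contradiction that $B \neq \emptyset$; note $x \in A$. Since $G[X]$ is connected, some edge runs from $A$ to $B$, and by the local analysis every such edge must be of the safe type, i.e., its $A$-endpoint is a neighbor of $y'$. Because $x \in A$ is a non-neighbor of $y'$, $x$ has no neighbor in $B$; so a shortest path $x = x_0 - x_1 - \dots - x_k$ in $G[X]$ ending in $B$ has $k \geq 2$, with $x_0, \dots, x_{k-1} \in A$ and $x_k \in B$. The safe rule gives $y'$ adjacent to $x_{k-1}$, and since $y'$ is non-adjacent to $x_0$, there is some $j \in \{0, \dots, k-2\}$ with $y'$ non-adjacent to $x_j$ and adjacent to $x_{j+1}$. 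The induced subgraph on $\{v, y, y', x_j, x_{j+1}\}$ then turns out to be a house (its complement is the path $y - y' - x_j - v - x_{j+1}$), contradicting $\overline{P_5}$-freeness.

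The second claim is then easy: if $y'$ had a neighbor in $X$, then $y'$ would be mixed on $X$ (since $x$ is a non-neighbor), and \ref{mixed} would produce adjacent $p, q \in X$ with $y'$ adjacent to $p$ and non-adjacent to $q$; using that $y$ is now complete to $X$, the subgraph on $\{v, y, y', p, q\}$ falls into the third subcase of the local analysis and is a house. The hardest step is clearly the safe subcase of the local analysis, which has no five-vertex obstruction; the shortest-path argument in $G[X]$ is precisely what promotes this local gap into the global house that finishes the first claim.
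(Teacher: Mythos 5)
Your proof is correct, and it is built from the same raw materials as the paper's: the connectivity of $G[X]$ supplies a frontier edge, and the forbidden configurations all live on five-vertex sets of the form $\{v,y,y',p,q\}$ (your three ``bad'' subcases are exactly the paper's three cases, producing the same $P_5$, $C_5$, and house). The difference is in the bookkeeping. The paper defines $X_0$ to be the set of vertices of $X$ that are adjacent to $y$ \emph{and} non-adjacent to $y'$, and takes a frontier edge between $X_0$ and $X \smallsetminus X_0$; with that choice the ``safe'' fourth pattern for the other endpoint is excluded by definition, so all three remaining patterns are immediately contradictory and both conclusions ($y$ complete to $X$, $y'$ anti-complete to $X$) drop out at once. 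You instead track only adjacency to $y$ first, which leaves the safe subcase open and forces the additional shortest-path/switching argument to manufacture a house from two adjacent $y$-neighbors on which $y'$ is mixed, followed by a second pass (via \ref{mixed}) for $y'$. Both of those extra steps are sound --- I checked the house whose complement is $y-y'-x_j-v-x_{j+1}$, and the role-swap of $y$ and $y'$ in your final step is legitimate since the local analysis is symmetric in them --- but the paper's single-set formulation buys a proof that is roughly a third the length. Worth remembering the trick: when you want to propagate a conjunction of adjacency conditions along a connected set, put the conjunction into the set you grow.
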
 
\begin{proof} 
It suffices to prove the first claim, for then the second will follow by an analogous argument applied to $\overline{G}$. Suppose that $y,y' \in Y$ are non-adjacent vertices such that some vertex in $X$ is adjacent to $y$ and non-adjacent to $y'$. Let $X_0$ be the set of all vertices in $X$ that are adjacent to $y$ and non-adjacent to $y'$; by construction, $X_0 \neq \emptyset$. We need to show that $X_0 = X$. Suppose otherwise. Since $G[X]$ is connected, there exist adjacent vertices $x \in X_0$ and $x' \in X \smallsetminus X_0$. Since $x' \in X \smallsetminus X_0$, we know that one of the following holds: 
\begin{itemize} 
\item $x'$ is complete to $\{y,y'\}$; 
\item $x'$ is anti-complete to $\{y,y'\}$; 
\item $x'$ is adjacent to $y'$ and non-adjacent to $y$. 
\end{itemize} 
But in the first case, $x'-v-x-y'-y$ is an induced house in $G$; in the second case, $x'-x-y-v-y'$ is an induced four-edge path in $G$; and in the third case, $x-y-v-y'-x'-x$ is an induced pentagon in $G$. Since $G$ is $\{P_5,\overline{P_5},C_5\}$-free, none of these three outcomes is possible. Thus, $X_0 = X$. This completes the argument. 
\end{proof} 
\begin{theorem}
\label{mixed-one-way}
Let $G$ be a $\{P_5,\overline{P_5},C_5\}$-free graph, and let $X,Y \subseteq V_G$ be disjoint sets such that $G[X]$ is connected and $G[Y]$ is anti-connected. Let $v \in V_G \smallsetminus (X \cup Y)$ be complete to $Y$ and anti-complete to $X$. Then the following hold: 
\begin{itemize} 
\item if $x_0 \in X$ is mixed on $Y$, then $X$ is complete to $Y \cap \Gamma_G(x_0)$ and anti-complete to $Y \smallsetminus \Gamma_G(x_0)$; 
\item if $y_0 \in Y$ is mixed on $X$, then $Y$ is complete to $X \cap \Gamma_G(y_0)$ and anti-complete to $X \smallsetminus \Gamma_G(y_0)$.
\end{itemize}
\end{theorem}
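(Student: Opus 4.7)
By complementation it suffices to prove the first bullet: in $\overline{G}$ the two sides swap ($\overline{G}[Y]$ is connected and $\overline{G}[X]$ anti-connected), $v$ becomes complete to $X$ and anti-complete to $Y$, and applying the first bullet inside $\overline{G}$ with the roles of $X$ and $Y$ exchanged, to a vertex $y_0 \in Y$ mixed on $X$, translates (after undoing the complement, using that mixedness is complement-invariant) into exactly the second bullet.

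So assume $x_0 \in X$ is mixed on $Y$ and write $Y_1 = Y \cap \Gamma_G(x_0)$ and $Y_2 = Y \smallsetminus \Gamma_G(x_0)$. Since $G[Y]$ is anti-connected, the second bullet of \ref{mixed} produces non-adjacent $y \in Y_1$ and $y' \in Y_2$; because $x_0 \in X$ distinguishes the non-adjacent pair $y,y'$, the first bullet of \ref{not-mixed-on-edge} yields a ``seed'' pair, namely $y$ is complete to $X$ and $y'$ is anti-complete to $X$. The whole argument then reduces to upgrading this seed behavior to every vertex of $Y_1 \cup Y_2$.

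To carry out the upgrade, fix any $\hat{y} \in Y_1$ and suppose toward contradiction that $\hat{y}$ is not complete to $X$. Since $\hat{y}$ is adjacent to $x_0 \in X$, it is mixed on $X$, so the first bullet of \ref{mixed} (applied this time to the connected side $X$) supplies adjacent $a,b \in X$ with $\hat{y}a$ an edge and $\hat{y}b$ a non-edge. The second bullet of \ref{not-mixed-on-edge} then forces $a$ complete to $Y$ and $b$ anti-complete to $Y$; but the seed $y$ is complete to $X$, so $yb$ is an edge, contradicting $b$ being anti-complete to $Y$. Hence $Y_1$ is complete to $X$. The analogous argument for $\hat{y}' \in Y_2$, using the seed $y'$ in place of $y$, produces some $a \in X$ complete to $Y$ and derives a contradiction from the fact that $y'a$ must be a non-edge (since $y'$ is anti-complete to $X$); so $Y_2$ is anti-complete to $X$, which completes the proof.

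No step presents a real obstacle; the only subtlety is bookkeeping, since the two bullets of \ref{mixed} and of \ref{not-mixed-on-edge} are invoked in different combinations on the connected side $X$ and on the anti-connected side $Y$. No new graph-theoretic idea beyond bootstrapping \ref{not-mixed-on-edge} along the connectedness of $G[X]$ is required.
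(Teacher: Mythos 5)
Your proof is correct and rests on the same two lemmas (\ref{mixed} and \ref{not-mixed-on-edge}) that the paper uses, so it is essentially the same approach. The only difference is organizational: the paper propagates along the connectivity of $G[X]$ (growing the set of vertices of $X$ that split $Y$ correctly and finding a crossing edge), whereas you first extract a seed pair $y,y'$ in $Y$ via the first bullet of \ref{not-mixed-on-edge} and then test each vertex of $Y$ against it via the second bullet; both arguments are valid and of comparable length.
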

\begin{proof}
It suffices to prove the first claim, for then the second will follow by an analogous argument applied to $\overline{G}$. Suppose that some $x_0 \in X$ is mixed on $Y$, and let $U = Y \cap \Gamma_G(x_0)$ and $V = Y \smallsetminus \Gamma_G(x_0)$. Then $U$ and $V$ are non-empty and disjoint, and $Y = U \cup V$. We need to show that $X$ is complete to $U$ and anti-complete to $V$. Let $X_0$ be the set of all vertices in $X$ that are complete to $U$ and anti-complete to $V$; by construction, $x_0 \in X_0$, and consequently, $X_0 \neq \emptyset$. We need to show that $X_0 = X$. Suppose otherwise. Then since $G[X]$ is connected, there exist adjacent $x \in X_0$ and $x' \in X \smallsetminus X_0$. Since $x \in X_0$, we know that $x$ is mixed on $Y$. Since $x' \in X \smallsetminus X_0$, we know that either $x'$ has a non-neighbor in $U$, or $x'$ has a neighbor in $V$. In either case, some vertex of $Y$ is mixed on $\{x,x'\}$, and so by~\ref{not-mixed-on-edge} $x$ is either complete or anticomplete to $Y$, which is a contradiction.
\end{proof}
\noindent 
We now need a definition. Given a graph $G$, a set $X \subseteq V_G$, and distinct vertices $u,v \in V_G \smallsetminus X$, we say that $u$ {\em dominates} $v$ in $X$ provided that every neighbor of $v$ in $X$ is also a neighbor of $u$. We are now ready to prove that every prime $\{P_5,\overline{P_5},C_5\}$-free graph that is not split admits a usable skew-partition. 
\begin{theorem} \label{preliminary-decomposition} Let $G$ be a prime $\{P_5,\overline{P_5},C_5\}$-free graph that is not split. Then $G$ admits a usable skew-partition. 
\end{theorem}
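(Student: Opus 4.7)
The plan is to invoke Theorem \ref{skew-partition-two-nontrivial} to produce a skew-partition $(X,Y)$ of $G$ with, after passing to $\overline{G}$ if necessary, at least two non-trivial components in $G[X]$ (so $m \geq 2$), and to aim for case (a) of the definition of \emph{usable}. The two disjointness clauses in case (a) come for free from Theorem \ref{not-mixed-on-two}: no vertex of $X$ is mixed on more than one anti-component of $G[Y]$, and no vertex of $Y$ is mixed on more than one component of $G[X]$, so the sets $S_1,\dots,S_n$ are automatically pairwise disjoint, and likewise for $K_1,\dots,K_m$. Thus all the real work lies in arranging that every vertex of $Y$ has at least one neighbor in each of $X_1,\dots,X_m$.

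I would carry this out by an extremal argument: among all skew-partitions $(X,Y)$ of $G$ with $m \geq 2$, choose one minimizing $|Y|$. Suppose for contradiction that some $y \in Y$ is anti-complete to some $X_i$, and consider the candidate $(X \cup \{y\},\, Y \smallsetminus \{y\})$. By Theorem \ref{not-mixed-on-two}, $y$ is mixed on at most one of the $X_\ell$, so upon adjoining $y$ to $X$ at most one of the $X_\ell$'s absorbs $y$; in particular $X_i$ persists as a non-trivial component of $G[X \cup \{y\}]$, and since $m \geq 2$ there is at least one further non-trivial component (either another untouched $X_\ell$, or the new component containing $y$). So the $X$-side still carries at least two non-trivial components, and the only thing left to check is that $G[Y \smallsetminus \{y\}]$ is still not anti-connected, which is the source of all the difficulty.

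A routine analysis shows that this verification fails only in a very rigid subcase, namely where $y \in K$ is the unique vertex whose removal collapses the anti-disconnection of $G[Y]$, so that $G[Y]$ consists of a single non-trivial anti-component $Y_1$ together with $y$, and $y$ is complete to $Y_1$. In this situation I would apply Theorem \ref{mixed-one-way} with $v = y$ (complete to $Y_1$ and anti-complete to $X_i$), which forces rigid uniformity: whenever any vertex of $Y_1$ is mixed on $X_i$, the whole of $Y_1$ shares its $X_i$-neighborhood, and symmetrically on the other side. Primality of $G$ guarantees some vertex $y_0 \in Y_1$ mixed on $X_i$ (since the $\geq 2$-element set $X_i$ cannot be homogeneous), and this yields a clean split of $X_i$ into $X_i \cap \Gamma_G(y_0)$ and $X_i \smallsetminus \Gamma_G(y_0)$ to which all of $Y_1$ is respectively complete and anti-complete. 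The hardest step is then to show, by a careful case analysis using $\{P_5,\overline{P_5},C_5\}$-freeness, that this split either exhibits a proper homogeneous set in $G$ (contradicting primality) or supplies an alternative skew-partition satisfying case (b) of the usability definition, completing the proof.
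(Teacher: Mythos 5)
Your setup matches the paper's argument almost exactly: you take the skew-partition from \ref{skew-partition-two-nontrivial}, pass to the complement so that $m \geq 2$, get the disjointness clauses from \ref{not-mixed-on-two}, and run an extremal argument (minimizing $|Y|$ is the same as the paper's maximizing $|X|$). Your analysis of the failure case is also right: maximality forces $G[Y\smallsetminus\{y\}]$ to be anti-connected, hence $y$ is complete to $Y\smallsetminus\{y\}$, and \ref{mixed-one-way} applied with $v=y$ (together with primality of $G$, which supplies a vertex of $Y\smallsetminus\{y\}$ mixed on $X_1$) forces every vertex of $Y\smallsetminus\{y\}$ to be mixed on $X_1$ with the same split of $X_1$ into a complete part and an anti-complete part.

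The problem is that you stop exactly where the real work begins. Your last sentence defers ``the hardest step'' to ``a careful case analysis'' that you do not carry out, and the disjunction you assert there (either a proper homogeneous set appears, or one finds an alternative skew-partition satisfying case (b)) is not justified and is not obviously true as stated. The paper closes the argument as follows, and none of these steps is routine: (i) since every vertex of $Y\smallsetminus\{y\}$ is mixed on $X_1$, \ref{not-mixed-on-two} prevents any of them from being mixed on $X_2,\dots,X_m$; primality then forces $m=2$ and forces $y$ itself to be mixed on $X_2$; (ii) a $P_5$ argument (using that $y$ is anti-complete to $X_1$ while each $\hat y \in Y\smallsetminus\{y\}$ is mixed on the connected set $X_1$) shows every vertex of $Y\smallsetminus\{y\}$ dominates $y$ in $X$; (iii) these facts combine to show that $Z=\{y\}\cup X_2\cup (S\cap\Gamma_G(y))$ is a proper homogeneous set, contradicting primality. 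Without (i)--(iii), or some substitute for them, the proof is incomplete: the uniform split of $X_1$ you derived does not by itself contradict anything, since $X_1$ may still fail to be homogeneous (the vertices of $Y\smallsetminus\{y\}$ genuinely are mixed on it).
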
 
\begin{proof} 
By \ref{skew-partition-two-nontrivial}, we know that $G$ admits a skew-partition $(X',Y')$ such that either $G[X']$ has at least two non-trivial components, or $G[Y']$ has at least two non-trivial anti-components. Since $G$ admits a usable skew-partition if and only if $\overline{G}$ does, we may assume that $G[X']$ contains at least two non-trivial components. 
\\
\\
Let $X \subseteq V_G$ be an inclusion-wise maximal set such that $(X,V_G \smallsetminus X)$ is a skew-partition of $G$, and $G[X]$ has at least two non-trivial components. Set $Y = V_G \smallsetminus X$. We claim that $(X,Y)$ satisfies (a) from the definition of a usable skew-partition. Let $(\{X_i\}_{i=1}^m,\{Y_j\}_{j=1}^n,S,K,\{S_j\}_{j=1}^n,\{K_i\}_{i=1}^m)$ be a decomposition of $(X,Y)$ in $G$. The fact that the sets $S_1,...,S_n$ are pairwise disjoint follows from \ref{not-mixed-on-two}, as does the fact that the sets $K_1,...,K_m$ are pairwise disjoint. 
\\
\\
It remains to show that every vertex in $Y$ has a neighbor in each of $X_1,...,X_m$. Suppose otherwise. Fix some $y \in Y$ such that $y$ is anti-complete to at least one of $X_1,...,X_m$; by symmetry, we may assume that $y$ is anti-complete to $X_1$. Since $G[X_1]$ is a non-trivial component of $G[X]$, since $G[X]$ has at least two non-trivial components, and since $y \notin X$ is anti-complete to $X_1$, we know that $G[X \cup \{y\}]$ has at least two non-trivial components. Now $G[Y \smallsetminus \{y\}]$ must be anti-connected, for otherwise, $X \cup \{y\}$ would contradict the maximality of $X$. Since $G[Y]$ is not anti-connected but $G[Y \smallsetminus \{y\}]$ is anti-connected, we know that $y$ is complete to $Y \smallsetminus \{y\}$. 
\\
\\
Since $G$ is prime, $X_1$ is not a homogeneous set in $G$. It follows that some vertex $y' \in V_G \smallsetminus X_1$ is mixed on $X_1$. Clearly, $y' \notin X$, and since $y$ is anti-complete to $X_1$, $y' \neq y$. Thus, $y' \in Y \smallsetminus \{y\}$. Now, $G[X_1]$ is connected, $G[Y \smallsetminus \{y\}]$ is anti-connected, $y$ is anti-complete to $X_1$ and complete to $Y \smallsetminus \{y\}$, and some vertex (namely $y'$) in $Y \smallsetminus \{y\}$ is mixed on $X_1$. By \ref{mixed-one-way}, we know that $Y \smallsetminus \{y\}$ is complete to $\Gamma_G(y') \cap X_1$ and anti-complete to $X_1 \smallsetminus \Gamma_G(y')$. Since $\Gamma_G(y') \cap X_1$ and $X_1 \smallsetminus \Gamma_G(y')$ are both non-empty (because $y'$ is mixed on $X_1$), it follows that every vertex in $Y \smallsetminus \{y\}$ is mixed on $X_1$. By \ref{not-mixed-on-two}, it follows that no vertex in $Y \smallsetminus \{y\}$ is mixed on any one of $X_2,...,X_m$. Since $m \geq 2$, since none of $X_2,...,X_m$ is a homogeneous set in $G$, and since (by \ref{not-mixed-on-two}) $y$ can be mixed on at most one of them, it follows that $m = 2$, and that $y$ is mixed on $X_2$. 
\\
\\
Now, we claim that every vertex in $Y \smallsetminus \{y\}$ dominates $y$ in $X$. Fix $\hat{y} \in Y \smallsetminus \{y\}$, and suppose that $\hat{y}$ does not dominate $y$ in $X$. Fix $\hat{x} \in X$ such that $y$ is adjacent to $\hat{x}$ but $\hat{y}$ is non-adjacent to $\hat{x}$. Since $y$ is anti-complete to $X_1$, we know that $\hat{x} \notin X_1$; since $G[X_1]$ is a component of $G[X]$, it follows that $\hat{x}$ is anti-complete to $X_1$. Since $\hat{y} \in Y \smallsetminus \{y\}$, we know that $\hat{y}$ is mixed on $X_1$; since $G[X_1]$ is connected, we know by \ref{mixed} that there exist adjacent vertices $x_1,x_1' \in X$ such that $\hat{y}$ is adjacent to $x_1$ and non-adjacent to $x_1'$. But now $\hat{x}-y-\hat{y}-x_1-x_1'$ is an induced four-edge path in $G$, which contradicts the assumption that $G$ is $P_5$-free. This proves that every vertex in $Y \smallsetminus \{y\}$ dominates $y$ in $X$. 
\\
\\
Let $Z = \{y\} \cup X_2 \cup (S \cap \Gamma_G(y))$. We claim that $Z$ is a homogeneous set in $G$. Clearly, $X \smallsetminus Z$ is anti-complete to $Z$. Next, we know that $y$ is complete to $Y \smallsetminus \{y\}$, and we proved above that every vertex in $Y \smallsetminus \{y\}$ dominates $y$ in $X$. Thus, $Y \smallsetminus \{y\}$ is complete to $\{y\} \cup (S \cap \Gamma_G(y))$, as well as to $X_2 \cap \Gamma_G(y)$. Now, we know that $y$ is mixed on $X_2$, and so $y$ has a neighbor in $X_2$; consequently, every vertex in $Y \smallsetminus \{y\}$ has a neighbor in $X_2$. Since no vertex in $Y \smallsetminus \{y\}$ is mixed on $X_2$, it follows that $Y \smallsetminus \{y\}$ is complete to $X_2$. Thus, $Y \smallsetminus \{y\}$ is complete to $Z$. This proves that $Z$ is a homogeneous set in $G$. Since $X_2 \subseteq Z$ and $Z \cap X_1 = \emptyset$, it follows that $2 \leq |Z| \leq |V_G|-2$, and consequently, $Z$ is a proper homogeneous set in $G$. But this contradicts the fact that $G$ is prime. 
\end{proof} 
\noindent 
The next two lemmas (\ref{not-mixed-on-other-half}, and \ref{skew-partition analysis}) examine the behavior of usable skew-partitions in $\{P_5,\overline{P_5},C_5\}$-free graphs. They will be used in the proof of \ref{skew-partition}, the main result of this section. 

\begin{theorem} \label{not-mixed-on-other-half} Let $G$ be a $\{P_5,\overline{P_5},C_5\}$-free graph, and let $(X,Y)$ be a usable skew-partition of $G$ with associated decomposition $(\{X_i\}_{i=1}^m,\{Y_j\}_{j=1}^n,S,K,\{S_j\}_{j=1}^n,$ $\{K_i\}_{i=1}^m)$. Then: 
\begin{itemize} 
\item if $(X,Y)$ satisfies (a) from the definition of a usable skew-partition, then no vertex in $X_1 \cup ... \cup X_m$ is mixed on any one of $Y_1,...,Y_n$; 
\item if $(X,Y)$ satisfies (b) from the definition of a usable skew-partition, then no vertex in $Y_1 \cup ... \cup Y_n$ is mixed on any one of $X_1,...,X_m$. 
\end{itemize} 
\end{theorem}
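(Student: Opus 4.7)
By complementation symmetry --- the class $\{P_5,\overline{P_5},C_5\}$-free, the notion of usable skew-partition, and the entire associated decomposition are all complement-invariant, with (a) and (b), and the roles of $X$ and $Y$, interchanged --- it suffices to prove the first statement. Assume $(X,Y)$ satisfies (a) and, for contradiction, some $x \in X_i$ is mixed on some $Y_j$. Applying \ref{mixed} to the anti-connected graph $G[Y_j]$ yields non-adjacent vertices $y, y' \in Y_j$ with $xy$ an edge and $xy'$ a non-edge of $G$. Fix any $i' \neq i$ (possible since $m \geq 2$); then $X_{i'}$ is anti-complete to $X_i$, yet by (a) both $y$ and $y'$ have neighbors in $X_{i'}$. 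The plan is to split on whether $y$ and $y'$ share a common neighbor in $X_{i'}$.

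In the no-common-neighbor case, the disjoint non-empty sets $N_y = \Gamma_G(y) \cap X_{i'}$ and $N_{y'} = \Gamma_G(y') \cap X_{i'}$ both lie in the connected graph $G[X_{i'}]$. I would take a shortest path $v_0, v_1, \ldots, v_l$ from $N_y$ to $N_{y'}$ and use the anti-completeness of $X_i$ to $X_{i'}$ together with the fact that its interior vertices lie outside $N_y \cup N_{y'}$ to exhibit an induced $P_5$: either $x-y-v_0-v_1-y'$ (if $l = 1$) or $x-y-v_0-v_1-v_2$ (if $l \geq 2$, using that $v_0 v_2$ is a non-edge by the shortest-path property and that $v_1, v_2 \notin N_y$). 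Either way this contradicts $P_5$-freeness.

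In the common-neighbor case, let $u \in X_{i'}$ be a common neighbor of $y$ and $y'$, so $x-y-u-y'$ is an induced $P_4$. By (a), pick $x^* \in X_i \cap \Gamma_G(y')$, necessarily distinct from $x$. The five-vertex set $\{x, y, u, y', x^*\}$ resolves three of four sub-cases immediately: if $x^*y$ is a non-edge, the set induces a $P_5$ (when $xx^*$ is also a non-edge) or a $C_5$ (when $xx^*$ is an edge); if $x^*y$ and $xx^*$ are both edges, the set induces an $\overline{P_5}$. After choosing $x^*$ optimally --- adjacent to $x$ if any such choice exists, else non-adjacent to $y$ if any such choice exists --- the only remaining possibility is that every vertex of $X_i \cap \Gamma_G(y')$ is non-adjacent to $x$ and adjacent to $y$.

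The main obstacle is this final case, which I would settle by a shortest-path argument in the connected graph $G[X_i]$. Take $x = z_0, z_1, \ldots, z_k$ to be a shortest path with $z_k \in \Gamma_G(y')$ and $z_t \notin \Gamma_G(y')$ for all $t < k$; then $z_k$ is adjacent to $y$ (by the remaining case) and $k \geq 2$ (since $x z_k$ is a non-edge). I then split on the $y$-adjacencies along the path. If every $z_t$ is adjacent to $y$, then $\{y', z_k, z_{k-1}, y, u\}$ induces an $\overline{P_5}$. Otherwise, letting $t$ be the smallest index with $z_t$ non-adjacent to $y$ (so $1 \leq t < k$, and consequently $z_t$ is also non-adjacent to $y'$ by the shortest-path choice), the set $\{y', u, y, z_{t-1}, z_t\}$ induces a $P_5$ via $y'-u-y-z_{t-1}-z_t$. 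Each branch yields a forbidden induced subgraph, completing the contradiction.
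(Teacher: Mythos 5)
Your proof is correct, but it takes a genuinely different route from the paper's. The paper's argument is much shorter: from the mixed vertex $x_1\in X_1$ it extracts non-adjacent $y_1,y_1'\in Y_1$ via \ref{mixed}, observes that $y_1'$ is mixed on $X_1$ and hence (by \ref{not-mixed-on-two}) complete to $X_2$, picks a neighbor $x_2\in X_2$ of $y_1$, and then invokes \ref{not-mixed-on-edge} with $v=x_2$, $X=X_1$, $Y=\{y_1,y_1'\}$ to conclude that $y_1'$ is anti-complete to $X_1$ --- contradicting condition (a). You instead bypass both auxiliary lemmas and argue directly: a shortest-path argument inside $X_{i'}$ when $y$ and $y'$ have no common neighbor there, and otherwise a five-vertex case analysis around the induced $P_4$ $x\hbox{-}y\hbox{-}u\hbox{-}y'$, with a further shortest-path argument inside $X_i$ to dispose of the one residual configuration (every neighbor of $y'$ in $X_i$ non-adjacent to $x$ and adjacent to $y$). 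I checked each of your exhibited five-vertex sets and they do induce the claimed $P_5$, $C_5$, or $\overline{P_5}$; the minimality properties of your paths supply exactly the non-adjacencies you need. What your approach buys is self-containment --- it uses only \ref{mixed} and clause (a) of usability, so it would survive even if \ref{not-mixed-on-edge} were unavailable --- at the cost of considerably more case analysis; the paper's proof buys brevity by reusing machinery it has already paid for, which is in keeping with how \ref{not-mixed-on-edge} and \ref{not-mixed-on-two} are deployed elsewhere in the section.
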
 
\begin{proof} 
It suffices to prove the first claim, for then the second will follow by an analogous argument applied to $\overline{G}$. So assume that $(X,Y)$ satisfies (a) from the definition of a usable skew-partition. We need to show that no vertex in $X_1 \cup ... \cup X_m$ is mixed on any one of $Y_1,...,Y_n$. Suppose otherwise. By symmetry, we may assume that some vertex $x_1 \in X_1$ is mixed on $Y_1$. By~\ref{mixed}, there exist non-adjacent vertices $y_1,y_1' \in Y_1$ such that $x_1$ is adjacent to $y_1$ and non-adjacent to $y_1'$. Since $(X,Y)$ satisfies (a) from the definition of a usable skew-partition, it follows that for each $i \in \{1,2\}$, $y_1'$ has a neighbor $x_i' \in X_i$. Then $y_1'$ is mixed on $X_1$ (because $y_1'$ is adjacent to $x_1'$ and non-adjacent to $x_1$), and so by \ref{not-mixed-on-two}, $y_1'$ is not mixed on $X_2$. Since $y_1'$ has a neighbor (namely $x_2'$) in $X_2$, it follows that $y_1'$ is complete to $X_2$. Next, since $(X,Y)$ satisfies (a) from the definition of a usable skew-partition, $y_1$ has a neighbor $x_2 \in X_2$. Now $G[X_1]$ is connected, $G[y_1,y_1']$ is anti-connected, $x_2$ is anti-complete to $X_1$ and complete to $\{y_1,y_1'\}$, $y_1$ is non-adjacent to $y_1'$, and $x_1$ is adjacent to $y_1$ and non-adjacent to $y_1'$; thus, by \ref{not-mixed-on-edge}, $y_1'$ is anti-complete to $X_1$. But this is impossible because $y_1'$ has a neighbor (namely $x_1'$) in $X_1$. This completes the argument. 
\end{proof}

\begin{theorem} \label{skew-partition analysis} Let $G$ be a $\{P_5,\overline{P_5},C_5\}$-free graph, and let $(X,Y)$ be a usable skew-partition of $G$ with associated decomposition $(\{X_i\}_{i=1}^m,\{Y_j\}_{j=1}^n,S,K,\{S_j\}_{j=1}^n,$ $\{K_i\}_{i=1}^m)$. 
\begin{itemize} 
\item If the skew-partition $(X,Y)$ satisfies (a) from the definition of a usable skew-partition, then either $n = 0$, or the following hold: 
\begin{itemize} 
\item $S_1,...,S_n$ are non-empty, and 
\item there exists some $j \in \{1,...,n\}$ such that $S_j$ is anti-complete to $Y \smallsetminus (Y_j \cup K)$; 
\end{itemize} 
\item If the skew-partition $(X,Y)$ satisfies (b) from the definition of a usable skew-partition, then either $m = 0$, or the following hold: 
\begin{itemize} 
\item $K_1,...,K_m$ are non-empty, and 
\item there exists some $i \in \{1,...,m\}$ such that $K_i$ is complete to $X \smallsetminus (X_i \cup S)$. 
\end{itemize} 
\end{itemize} 
\end{theorem}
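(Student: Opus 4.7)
The plan is to prove the first bullet only; the second follows by applying the argument to $\overline{G}$, which swaps conditions (a) and (b) and exchanges the roles of $(X,Y)$, $(S,K)$, and $(S_j, K_i)$. I assume $(X,Y)$ satisfies (a) and $n \geq 1$, and split the argument into two independent claims: each $S_j$ is non-empty, and some $S_j$ is anti-complete to $\bigcup_{j' \neq j} Y_{j'}$.

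For non-emptiness I fix $j$ and suppose $S_j = \emptyset$. Then no vertex of $V_G \smallsetminus Y_j$ is mixed on $Y_j$: by \ref{not-mixed-on-other-half} no vertex of $X_1 \cup \cdots \cup X_m$ is, by assumption no vertex of $S$ is, and the vertices of $K$ and of the other $Y_{j'}$ are complete to $Y_j$ because they sit in different anti-components of $G[Y]$. Thus $Y_j$ is a homogeneous set in $G$ of size between $2$ and $|V_G|-1$, and this proper homogeneous set contradicts primeness of $G$ (implicit in the context in which this lemma is later applied).

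For the second claim, I argue by contradiction: assume for every $j$ some $s \in S_j$ has a neighbor in $\bigcup_{j' \neq j} Y_{j'}$. Pairwise disjointness of the $S_j$'s forces such an $s$ to be complete (not merely mixed) to the $Y_{j'}$ in which it has a neighbor, so I record this in an auxiliary digraph $D$ on $\{1,\ldots,n\}$ with $j \to j'$ whenever some element of $S_j$ is complete to $Y_{j'}$. The hypothesis gives $D$ positive out-degree everywhere, so $D$ contains a directed cycle, and I pick a shortest one $j_1 \to j_2 \to \cdots \to j_k \to j_1$ with $k \geq 2$.

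The heart of the argument is exhibiting an induced house in each of the two cases, contradicting $\overline{P_5}$-freeness. If $k=2$, pick $s_1 \in S_{j_1}$ complete to $Y_{j_2}$ and $s_2 \in S_{j_2}$ complete to $Y_{j_1}$, use \ref{mixed} to obtain non-adjacent $y_1, y_1' \in Y_{j_1}$ with $s_1 y_1$ an edge and $s_1 y_1'$ a non-edge, and pick any non-neighbor $y_2' \in Y_{j_2}$ of $s_2$; then $\{y_1, y_1', s_1, s_2, y_2'\}$ induces a house whose two degree-three vertices are $y_1$ and $y_2'$. If $k \geq 3$, pick $s_1 \in S_{j_1}$ complete to $Y_{j_2}$; since $s_1 \notin S_{j_3}$, $s_1$ is either complete or anti-complete to $Y_{j_3}$. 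In the complete case the edge $j_1 \to j_3$ shortcuts the chosen cycle to length $k-1$, contradicting minimality; in the anti-complete case pick $s_2 \in S_{j_2}$ complete to $Y_{j_3}$, non-adjacent $y, y' \in Y_{j_2}$ witnessing the mixedness of $s_2$, and any $y_3 \in Y_{j_3}$, and then $\{y, y', s_1, s_2, y_3\}$ induces a house with $y$ and $y_3$ the degree-three vertices. The main obstacle is the bookkeeping in these two house verifications: for each of the ten pairs one must track whether the endpoints lie in the same or different $Y_j$'s, in $S$, or cross between them, and then invoke the appropriate complete/anti-complete/mixed fact together with the stability of $S$ to account for all six edges and four non-edges of the house.
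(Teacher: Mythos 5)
Your proof is correct and follows essentially the same route as the paper's: the non-emptiness argument is identical (including the reliance on the primeness of $G$, which the paper's own proof also invokes even though it is absent from the theorem statement), and for the second part you build the same auxiliary digraph on the sets $S_1,\dots,S_n$ and rule out the same two induced-house configurations, merely organizing the global argument around a shortest directed cycle where the paper inducts along directed paths and then takes a maximal one. Both versions are sound, so no further changes are needed.
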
 
\begin{proof} 
It suffices to prove the first statement, for then the second will follow by an analogous argument applied to $\overline{G}$. So suppose that the skew-partition $(X,Y)$ satisfies (a) from the definition of a usable skew-partition. If $n = 0$, then we are done; so suppose that $n \geq 1$. 
\\
\\
We first show that the sets $S_1,...,S_n$ are non-empty. By symmetry, it suffices to show that $S_1 \neq \emptyset$. By \ref{not-mixed-on-other-half}, no vertex of $X_1 \cup ...\cup X_m$ is mixed on $Y_1$. By construction, no vertex in $Y \smallsetminus Y_1$ is mixed on $Y_1$. Since $2 \leq |Y_1| \leq |V_G|-1$, and $Y_1$ is not a proper homogeneous set in $G$, it follows that some vertex in $S$ is mixed on $Y_1$, and therefore $S_1 \neq \emptyset$.
\\
\\
It remains to show that there exists some $i \in \{1,...,n\}$ such that $S_j$ is anti-complete to $Y \smallsetminus (Y_j \cup K)$. By what we just showed, the sets $S_1,...,S_n$ are non-empty, and since the skew-partition $(X,Y)$ is usable, the sets $S_1,...,S_n$ are pairwise disjoint. Now, let $\vec{H}$ be the directed graph with vertex-set $\{S_1,...,S_n\}$, and in which for all distinct $i,j \in \{1,...,n\}$, $(S_i,S_j)$ is an arc in $\vec{H}$ provided that some vertex in $S_i$ is complete to $Y_j$. 
\\
\\
We first prove the following: for all $t \geq 2$ and all pairwise distinct indices $i_1,...,i_t \in \{1,...,n\}$, if $S_{i_1}-...-S_{i_t}$ is a directed path in 
$\vec{H}$, then $S_{i_t}$ is anti-complete to $Y_{i_1} \cup ... \cup Y_{i_{t-1}}$ in $G$. We proceed by induction on $t$. 
\\
\\
For the base case, fix distinct $i_1,i_2 \in \{1,...,n\}$, and suppose that $S_{i_1}-S_{i_2}$ is a directed path in $\vec{H}$; then $(S_{i_1},S_{i_2})$ is an arc in $\vec{H}$, and consequently, some vertex $s_1 \in S_{i_1}$ is complete to $Y_{i_2}$. Now, suppose that $S_{i_2}$ is not anti-complete to $Y_{i_1}$; fix $s_2 \in S_{i_2}$ such that $s_2$ has a neighbor in $Y_{i_1}$. Then since no vertex in $S_{i_2}$ is mixed on $Y_{i_1}$, we know that $s_2$ is complete to $Y_{i_1}$. By definition, we know that $s_2$ is mixed on $Y_{i_2}$; by~\ref{mixed}, there exist non-adjacent vertices $y_2,y_2' \in Y_{i_2}$ such that $s_2$ is adjacent to $y_2$ and non-adjacent to $y_2'$. Next, by definition, $s_1$ is mixed on $Y_{i_1}$, and so there exists some $y_1' \in Y_{i_1}$ such that $s_1$ is non-adjacent to $y_1'$. Since $Y_{i_1}$ is complete to $Y_{i_2}$, we know that $y_1'$ is complete to $\{y_2,y_2'\}$, and since $S_{i_1} \cup S_{i_2}$ is a stable set, we know that $s_1$ is non-adjacent to $s_2$. But now $y_1'-s_1-s_2-y_2'-y_2$ is an induced house in $G$, which is a contradiction. This completes the base case. 
\\
\\
For the induction step, suppose that the claim holds for some $t \geq 2$; we need to show that it holds for $t+1$. Suppose that $i_1,...,i_t,i_{t+1} \in \{1,...,n\}$ are pairwise distinct and that $S_{i_1}-...-S_{i_t}-S_{i_{t+1}}$ is a directed path in $\vec{H}$. We need to show that $S_{i_{t+1}}$ is anti-complete to $Y_{i_1} \cup ... \cup Y_{i_t}$. By the induction hypothesis applied to the directed path $S_{i_2}-...-S_{i_t}-S_{i_{t+1}}$, we know that $S_{i_{t+1}}$ is anti-complete to $Y_{i_2} \cup... \cup Y_{i_t}$, and so we just need to show that $S_{i_{t+1}}$ is anti-complete to $Y_{i_1}$. Suppose otherwise. Then there exists some $s_{t+1} \in S_{i_{t+1}}$ such that $s_{t+1}$ has a neighbor in $Y_{i_1}$; since no vertex in $S_{i_{t+1}}$ is mixed on $Y_{i_1}$, this means that $s_{t+1}$ is complete to $Y_{i_1}$. Next, since $(S_{i_1},S_{i_2})$ is an arc in $\vec{H}$, we know that some vertex $s_1 \in S_{i_1}$ is complete to $Y_{i_2}$. By construction, $s_1$ is mixed on $Y_{i_1}$, and so by \ref{mixed}, we know that there exist non-adjacent $y_1,y_1' \in Y_{i_1}$ such that $s_1$ is adjacent to $y_1$ and non-adjacent to $y_1'$. Now, fix $y_2 \in Y_{i_2}$. Then $s_{t+1}$ is non-adjacent to $y_2$ and $s_1$ is adjacent to $y_2$. But now $y_1-y_1'-s_1-s_{t+1}-y_2$ is an induced house in $G$, which is a contradiction. This completes the induction. 
\\
\\
Now, let $S_{i_1}-...-S_{i_t}$ be a maximal directed path in $\vec{H}$. Set $j = i_t$, and fix $k \in \{1,...,n\} \smallsetminus \{j\}$; we need to show that $S_j$ is anti-complete to $Y_k$. If $k \in \{i_1,...,i_{t-1}\}$, then the result follows from what we just showed. So assume that $k \notin \{i_1,...,i_{t-1}\}$. Suppose that $S_j$ is not anti-complete to $Y_k$. Then some vertex $s_j \in S_j$ has a neighbor in $Y_k$; since $s_j$ is not mixed on $Y_k$, it follows that $s_j$ is complete to $Y_k$, and consequently, $(S_j,S_k)$ is an arc in $\vec{H}$. But now $S_{i_1}-...-S_{i_t}-S_k$ is a directed path in $\vec{H}$, contrary to the maximality of $S_{i_1}-...-S_{i_t}$. Thus, $Y_j$ is anti-complete to $Y_k$, and the result is immediate. 
\end{proof} 
\noindent 
We are finally ready to prove the main result of this section. 
\begin{theorem} \label{skew-partition} Let $G$ be a $\{P_5,\overline{P_5},C_5\}$-free graph. Then at least one of the following holds: 
\begin{itemize} 
\item[(1)] $G$ is a split graph; 
\item[(2)] $G$ contains a proper homogeneous set; 
\item[(3)] $G$ admits a skew-partition $(X,Y)$ with associated decomposition $(\{X_i\}_{i=1}^m,$ $\{Y_j\}_{j=1}^n,S,K,\{S_j\}_{j=1}^n,\{K_i\}_{i=1}^m)$ such that: 
\begin{itemize} 
\item[(3.1)] $m \geq 1$, 
\item[(3.2)] the sets $K_1,...,K_m$ are pairwise disjoint and non-empty, 
\item[(3.3)] for all $i \in \{1,...,m\}$, no vertex in $Y \smallsetminus K_i$ is mixed on $X_i$, 
\item[(3.4)] for all $i \in \{1,..,m\}$, at least two vertices in $V_G \smallsetminus (X_i \cup S)$ are anti-complete to $X_i$, 
\item[(3.5)] there exists some $i \in \{1,..,m\}$ such that $K_i$ is complete to $X \smallsetminus (X_i \cup S)$; 
\end{itemize} 
\item[(4)] $G$ admits a skew-partition $(X,Y)$ with associated decomposition $(\{X_i\}_{i=1}^m,$ $\{Y_j\}_{j=1}^n,S,K,\{S_j\}_{j=1}^n,\{K_i\}_{i=1}^m)$ such that: 
\begin{itemize} 
\item[(4.1)] $n \geq 1$, 
\item[(4.2)] the sets $S_1,...,S_n$ are pairwise disjoint and non-empty, 
\item[(4.3)] for all $j \in \{1,...,n\}$, no vertex in $X \smallsetminus S_j$ is mixed on $Y_j$, 
\item[(4.4)] for all $j \in \{1,...,n\}$, at least two vertices in $V_G \smallsetminus (Y_j \cup K)$ are complete to $Y_j$, 
\item[(4.5)] there exists some $j \in \{1,...,m\}$ such that $S_j$ is anti-complete to $Y \smallsetminus (Y_j \cup K)$. 
\end{itemize} 
\end{itemize} 
\end{theorem}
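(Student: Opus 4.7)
\emph{Proof plan.} I may assume $G$ is prime and not split, else (1) or (2) holds. By \ref{preliminary-decomposition}, $G$ admits a usable skew-partition $(X,Y)$. The statement of the theorem is invariant under complementation, with (1) and (2) self-dual and (3), (4) interchanged; moreover, a usable skew-partition $(X,Y)$ of $G$ satisfying (b) becomes a usable skew-partition $(Y,X)$ of $\overline{G}$ satisfying (a). So by passing to $\overline{G}$ if necessary, I may assume that $(X,Y)$ satisfies condition (a); in particular $m\geq 2$. Fix a decomposition $(\{X_i\}_{i=1}^m,\{Y_j\}_{j=1}^n,S,K,\{S_j\}_{j=1}^n,\{K_i\}_{i=1}^m)$. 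By \ref{skew-partition analysis}, either $n=0$, or all $S_j$ are non-empty and some $S_{j_0}$ is anti-complete to $Y\smallsetminus(Y_{j_0}\cup K)$; I split into these two subcases.

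If $n\geq 1$, I will verify (4). Clause (4.1) is immediate; (4.2) combines disjointness (from (a)) with non-emptiness (from \ref{skew-partition analysis}); (4.5) is the remaining conclusion of \ref{skew-partition analysis}. For (4.3), \ref{not-mixed-on-other-half} forbids vertices in $X_1\cup\cdots\cup X_m$ from being mixed on any $Y_j$, and by the definition of $S_j$ no vertex in $S\smallsetminus S_j$ is mixed on $Y_j$; together these cover $X\smallsetminus S_j$. For (4.4), pick any $y\in Y_j$; by (a), $y$ has a neighbor $x_i$ in each $X_i$, and by \ref{not-mixed-on-other-half} each such $x_i$ is complete to $Y_j$, giving two witnesses (in two distinct $X_i$'s) since $m\geq 2$.

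If $n=0$, then $Y=K$ is a clique, and I will verify (3). Clause (3.1) and the disjointness half of (3.2) are immediate. For the non-emptiness half of (3.2), primeness forces each $X_i$ to admit a vertex mixed on it from outside $X_i$; since $S$ and every $X_j$ with $j\neq i$ are anti-complete to $X_i$, this vertex lies in $K=Y$, so $K_i\neq\emptyset$. Clause (3.3) follows at once from $Y\smallsetminus K_i=K\smallsetminus K_i$ and the definition of $K_i$. For (3.4), any $X_j$ with $j\neq i$ supplies at least two vertices of $V_G\smallsetminus(X_i\cup S)$ anti-complete to $X_i$. For (3.5), fix $v\in K_i$: $v$ is mixed on $X_i$, so by \ref{not-mixed-on-two} it is unmixed on every other $X_j$; by (a), $v$ has a neighbor in each such $X_j$, and is therefore complete to each, so $K_i$ is complete to $\bigcup_{j\neq i}X_j=X\smallsetminus(X_i\cup S)$. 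The main obstacle is precisely this $n=0$ subcase: the usable skew-partition is packaged in ``(a)-mode,'' seemingly pointed toward (4), yet when $Y$ collapses to a clique one must flip to (3); this depends on primeness (to produce non-empty $K_i$) together with \ref{not-mixed-on-two} and (a) (to convert mixedness into completeness across distinct components).
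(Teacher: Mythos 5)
Your proof is correct and follows essentially the same route as the paper: reduce to a prime, non-split graph, invoke \ref{preliminary-decomposition} to get a usable skew-partition, and then verify (3) or (4) using \ref{not-mixed-on-two}, \ref{not-mixed-on-other-half}, and \ref{skew-partition analysis}. The only (immaterial) difference is which two of the four cases you treat directly — you normalize to condition (a) first and split on $n=0$ versus $n\geq 1$, whereas the paper handles the cases ``(a) with $n=0$'' and ``(b) with $m\geq 1$'' and obtains the other two by complementation; these choices are equivalent under the same symmetry.
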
 
\begin{proof} 
If $G$ is a split graph, or if $G$ contains a proper homogeneous set, then we are done. So assume that $G$ is a prime graph, and that $G$ is not split. Then by \ref{preliminary-decomposition}, $G$ admits a usable skew-partition. Let $(X,Y)$ be a usable skew-partition of $G$, and let $(\{X_i\}_{i=1}^m,\{Y_j\}_{j=1}^n,S,K,\{S_j\}_{j=1}^n,\{K_i\}_{i=1}^m)$ be its associated decomposition. By definition, if $(X,Y)$ satisfies (a) from the definition of a usable skew-partition, then $m \geq 2$, and if $(X,Y)$ satisfies (b) from the definition of a usable skew-partition, then $n \geq 2$. Thus, there are four cases to consider: 
\begin{itemize} 
\item $(X,Y)$ satisfies (a) from the definition of a skew-partition, $m \geq 2$, and $n = 0$; 
\item $(X,Y)$ satisfies (a) from the definition of a skew-partition, $m \geq 2$, and $n \geq 1$; 
\item $(X,Y)$ satisfies (b) from the definition of a skew-partition, $n \geq 2$, and $m = 0$; 
\item $(X,Y)$ satisfies (b) from the definition of a skew-partition, $n \geq 2$, and $m \geq 1$. 
\end{itemize} 
We claim that in the first and fourth case, (3) holds, and in the second and third case, (4) holds. But note that given the symmetry between $G$ and $\overline{G}$, we need only consider the first and fourth case. 
\\
\\
Suppose first that $(X,Y)$ satisfies (a) from the definition of a skew-partition, $m \geq 2$, and $n = 0$. Then (3.1) is immediate. We next prove (3.2). Since each of $X_1,...,X_m$ contains at least two vertices, and since $G$ is prime, we know that for all $i \in \{1,...,m\}$, some vertex in $V_G \smallsetminus X_i$ is mixed on $X_i$; clearly, this vertex cannot lie in $X$, and so it must lie in $Y$. Since $Y = K$, it follows that for all $i \in \{1,...,m\}$, some vertex in $K$ is mixed on $X_i$, and by definition, this vertex lies in $K_i$. This proves that each of $K_1,...,K_m$ is non-empty. The fact that $K_1,...,K_m$ are pairwise disjoint follows from the definition of a usable skew-partition. This proves (3.2). For (3.3), we first observe that since $n = 0$, we have that $Y = K$. By the definition of $K_1,...,K_m$, we know that for all $i \in \{1,...,m\}$, no vertex in $K \smallsetminus K_i$ is mixed on $X_i$. This proves (3.3). Next, $X_2$ is anti-complete to $X_1$ and $|X_2| \geq 2$, and (3.4) follows. For (3.5), we note that since $(X,Y)$ satisfies (a) from the definition of a usable skew-partition, every vertex in $Y$ has a neighbor in each of $X_1,...,X_m$. Now, fix an arbitrary $i \in \{1,...,m\}$. Then each vertex in $K_i$ has a neighbor in each of $X_1,...,X_m$. Since the sets $K_1,...,K_m$ are pairwise disjoint, it follows that no vertex in $K_i$ is mixed on any one of $X_1,...,X_{i-1},X_{i+1},...,X_m$, and consequently, every vertex in $K_i$ is complete to each of $X_1,...,X_{i-1},X_{i+1},...,X_m$. Thus, $K_i$ is complete to $X \smallsetminus (X_i \cup S)$, and (3.5) follows. 
\\
\\
Suppose now that $(X,Y)$ satisfies (b) from the definition of a skew-partition, $n \geq 2$, and $m \geq 1$. Then (3.1) is immediate. The fact that the sets $K_1,...,K_m$ are non-empty follows from \ref{skew-partition analysis}, and the fact that they are pairwise disjoint follows from the definition of a usable skew-partition; this proves (3.2). (3.3) follows from \ref{not-mixed-on-other-half}. For (3.4), fix $i \in \{1,...,m\}$, and fix $x_i \in X_i$. Since $(X,Y)$ satisfies (a) from the definition of a usable skew-partition, we know that $x_i$ has a non-neighbor in each of $Y_1,...,Y_n$; since $n \geq 2$, there exist $y_1 \in Y_1$ and $y_2 \in Y_2$ such that $x_i$ is non-adjacent to both $y_1$ and $y_2$. Thus, $y_1$ and $y_2$ both have a non-neighbor (namely $x_i$) in $X_i$. By \ref{not-mixed-on-other-half}, neither $y_1$ nor $y_2$ is mixed on $X_i$; thus, $y_1$ and $y_2$ are both anti-complete to $X_1$. This proves (3.4). Finally, (3.5) follows from \ref{skew-partition analysis}. 
\end{proof}

\section{Split graph divide} \label{section:split-graph-divide}

Given a graph $G$ and pairwise disjoint (possibly empty) sets $A,B,C,L,T \subseteq V_G$, we say that $(A,B,C,L,T)$ is a {\em split graph divide} of $G$ provided that the following hold: 
\begin{itemize} 
\item $V_G = A \cup B \cup C \cup L \cup T$; 
\item $|A| \geq 2$; 
\item $|C| \geq 2$; 
\item $L$ is a non-empty clique;
\item $T$ is a (possibly empty) stable set; 
\item every vertex of $L$ is mixed on $A$; 
\item $A$ is complete to $B$ and anti-complete to $C \cup T$; 
\item $L$ is complete to $B \cup C$; 
\item $T$ is anti-complete to $C$. 
\end{itemize} 
\noindent 
We say that a graph $G$ {\em admits a split graph divide} provided that there exist sets $A,B,C,L,T \subseteq V_G$ such that $(A,B,C,L,T)$ is a split graph divide of $G$. 
\\
\\
Split graph divide can be thought of as a relaxation of the homogeneous set decomposition. A set $X \subseteq V_G$ is a homogeneous set in $G$ if no vertex in $V_G \smallsetminus X$ is mixed on $X$. In the case of the split graph divide, there are vertices that are mixed on the set $A$, but they all lie in the clique $L$, and adjacency between $L$ and the rest of the graph is heavily restricted. 
\\
\\
We now use \ref{skew-partition} to prove another decomposition theorem for $\{P_5,\overline{P_5},C_5\}$-free graphs, which is the main result of this section. 
\begin{theorem} \label{decomposition} Let $G$ be a $\{P_5,\overline{P_5},C_5\}$-free graph. Then at least one of the following holds: 
\begin{itemize} 
\item $G$ is a split graph; 
\item $G$ contains a proper homogeneous set; 
\item at least one of $G$ and $\overline{G}$ admits a split graph divide. 
\end{itemize} 
\end{theorem}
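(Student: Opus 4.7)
The plan is to apply \ref{skew-partition} and extract a split graph divide from cases (3) or (4). If $G$ is a split graph or contains a proper homogeneous set, we are done, so suppose that one of (3) or (4) holds. Case (4) for $G$ is just case (3) for $\overline{G}$ after a mechanical translation (components of $G[X]$ swap with anti-components of $\overline{G}[X]$, $K$ with $S$, and the $K_i$'s with the $S_j$'s, so that conditions (4.1)--(4.5) for $G$ become conditions (3.1)--(3.5) for $\overline{G}$); hence, passing to $\overline{G}$ if necessary, I may assume $G$ itself admits a skew-partition $(X,Y)$ with decomposition $(\{X_i\}_{i=1}^m,\{Y_j\}_{j=1}^n,S,K,\{S_j\}_{j=1}^n,\{K_i\}_{i=1}^m)$ satisfying (3.1)--(3.5) and, after relabelling, that $K_1$ is complete to $X \smallsetminus (X_1 \cup S)$.

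I would then build the split graph divide $(A,B,C,L,T)$ by setting $A := X_1$ and $L := K_1$, and partitioning $V_G \smallsetminus (A \cup L)$ according to adjacency to $A$ and $L$: let $B$ be the vertices complete to $A$, let $C$ be the vertices anti-complete to $A$ and complete to $L$, and let $T$ be the vertices anti-complete to $A$ but not complete to $L$. The first check is that this is well-defined as a partition: no vertex of $V_G \smallsetminus (A \cup L)$ is mixed on $A$, since $S \cup X_2 \cup \cdots \cup X_m$ is anti-complete to $X_1$ (as in any skew-partition), $K \smallsetminus K_1$ is handled by the definition of $K_1$, and the $Y_j$'s are covered by (3.3). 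From this point the easy axioms fall out: $|A| \ge 2$ because $X_1$ is non-trivial; $L = K_1$ is a non-empty clique of vertices mixed on $A$ by (3.2), the definition of $K_1$, and the fact that $K$ is a clique; $L$ is complete to $X_2 \cup \cdots \cup X_m$ by (3.5) and to $Y \smallsetminus K_1$ because $K$ is complete to $Y \smallsetminus K$ and is a clique, so $B \subseteq Y \smallsetminus K_1$ is complete to $L$, and the two vertices guaranteed by (3.4) lie in $C$, giving $|C| \ge 2$; lastly $T \subseteq S$ (since every vertex of $X_2 \cup \cdots \cup X_m$, $K \smallsetminus K_1$, and each $Y_j$ is already complete to $L$), so $T$ is stable.

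The only substantive remaining verification---and the only place the $P_5$-freeness hypothesis will be used---is that $T$ is anti-complete to $C$. My plan is to argue by contradiction: if $t \in T$ were adjacent to some $c \in C$, then picking $\ell \in L$ with $t \not\sim \ell$ (which exists by the definition of $T$) and applying \ref{mixed} to $\ell$ on $A$ produces adjacent $a, a' \in A$ with $a \sim \ell$ and $a' \not\sim \ell$; since $c$ and $t$ are anti-complete to $A$ and $c$ is complete to $L$, one verifies that the edges induced on $\{a', a, \ell, c, t\}$ are precisely $a'a, a\ell, \ell c, ct$, and hence $a'-a-\ell-c-t$ is an induced $P_5$ in $G$, a contradiction. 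I expect this step to be the main obstacle, because it is the only non-bookkeeping part of the argument and it is also what motivates the precise form of the split graph divide definition (in particular the ``$T$ anti-complete to $C$'' clause, which $P_5$-freeness forces for free once the other adjacency structure has been pinned down by (3.3) and the choice of $K_1$ from (3.5)).
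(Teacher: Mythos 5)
Your proposal is correct and follows essentially the same route as the paper: reduce to case (3) of \ref{skew-partition} via complementation, set $A = X_1$ and $L = K_1$ with $K_1$ chosen complete to $X \smallsetminus (X_1 \cup S)$, and note that your adjacency-based definitions of $B$, $C$, $T$ coincide with the paper's explicit ones. The final $P_5$ argument $a'-a-\ell-c-t$ is the same five-vertex path (reversed) that the paper uses to show $T$ is anti-complete to $C$.
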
 
\begin{proof} 
We may assume that $G$ is prime and that it is not a split graph, for otherwise we are done. Then by \ref{skew-partition}, $G$ admits a skew-partition $(X,Y)$ with associated decomposition $(\{X_i\}_{i=1}^m,$ $\{Y_j\}_{j=1}^n,S,K,\{S_j\}_{j=1}^n,\{K_i\}_{i=1}^m)$ that satisfies either (3.1)-(3.5) or (4.1)-(4.5) from \ref{skew-partition}. We claim that if $(X,Y)$ satisfies (3.1)-(3.5) from \ref{skew-partition}, then $G$ admits a split graph divide, and if $(X,Y)$ satisfies (4.1)-(4.5) from \ref{skew-partition}, then $\overline{G}$ admits a split graph divide. But it suffices to prove the first claim, for then the second will follow by an analogous argument applied to $\overline{G}$. 
\\
\\
So suppose that $(X,Y)$ satisfies (3.1)-(3.5) from \ref{skew-partition}. By (3.5) and by symmetry, we may assume that $K_1$ is complete to $X \smallsetminus (X_1 \cup S)$, that is, that $K_1$ is complete to $X_2 \cup ... \cup X_m$. We now construct sets $A,B,C,L,T$ as follows: 
\begin{itemize} 
\item let $A = X_1$; 
\item let $B$ be the set of all vertices in $Y$ that are complete to $X_1$; 
\item let $C$ be the union of the following three sets: 
\begin{itemize} 
\item $X_2 \cup ... \cup X_m$, 
\item the set of all vertices in $Y$ that are anti-complete to $X_1$, 
\item the set of all vertices in $S$ that are complete to $K_1$; 
\end{itemize} 
\item let $L = K_1$; 
\item let $T$ be the set of all vertices in $S$ that have a non-neighbor in $K_1$. 
\end{itemize} 
\noindent 
First, it is clear that the sets $A,B,C,L,T$ are pairwise disjoint. Next, it is clear that $X \cup K_1 \subseteq A \cup B \cup C \cup L \cup T$, and it is also clear that all vertices in $Y$ that are not mixed on $X_1$ lie in $A \cup B \cup C \cup L \cup T$. But since by (3.3), no vertex in $Y \smallsetminus K_1$ is mixed on $X_1$, it follows that $Y \smallsetminus K_1 \subseteq A \cup B \cup C \cup L \cup T$. This proves that $V_G = A \cup B \cup C \cup L \cup T$. It is immediate by construction that $G[A]$ is connected and that $|A| \geq 2$. The fact that $|C| \geq 2$ follows from (3.4). By construction, $L$ is a clique, and by (3.2), $L$ is non-empty. By construction, $T$ is a stable set, every vertex of $L$ is mixed on $A$, and $A$ is complete to $B$ and anti-complete to $C \cup T$. It remains to prove that $L$ is complete to $B \cup C$, and that $T$ is anti-complete to $C$. 
\\
\\
First, we know by construction that $K_1$ is complete to $Y \smallsetminus K_1$. Thus, to show that $L$ is complete to $B \cup C$, we just need to show that $K_1$ is complete to $X_2 \cup ... \cup X_m$. But this follows from the choice of $K_1$. 
\\
\\
It remains to show that $T$ is anti-complete to $C$. This means that we have to show that $T$ is anti-complete to each of the following three sets: 
\begin{itemize} 
\item $X_2 \cup ... \cup X_m$, 
\item the set of all vertices in $Y$ that are anti-complete to $X_1$, 
\item the set of all vertices in $S$ that are complete to $K_1$; 
\end{itemize} 
\noindent 
It is clear that $T$ is anti-complete to the first and the third set, and we just need to prove that $T$ is anti-complete to the second of the three sets above. Suppose otherwise. Fix adjacent $s \in T$ and $y \in Y$ such that $y$ is anti-complete to $X_1$. Since $s \in T$, we know that $s$ has a non-neighbor $k_1 \in K_1$. Since $k_1 \in K_1$, we know that $k_1$ is mixed on $X_1$. Since $G[X_1]$ is connected, we know by \ref{mixed} that there exist adjacent vertices $x_1,x_1' \in X_1$ such that $k_1$ is adjacent to $x_1$ and non-adjacent to $x_1'$. But now $s-y-k_1-x_1-x_1'$ is an induced four-edge path in $G$, which is impossible. Thus, $T$ is anti-complete to $C$. This completes the argument. 
\end{proof}

\section{Split graph unification} \label{section:split-graph-unification} 

In this section we define a composition operation that ``reverses'' the split graph divide decomposition. Let $A,B,C,L,T$ be pairwise disjoint sets, and assume that $A$ and $C$ are non-empty. Let $a,c$ be distinct vertices such that $a,c \notin A \cup B \cup C \cup L \cup T$. Let $G_1$ be a graph with vertex-set $V_{G_1} = A \cup L \cup T \cup \{c\}$, and adjacency as follows: 
\begin{itemize} 
\item $L$ is a (possibly empty) clique; 
\item $T$ is a (possibly empty) stable set; 
\item $A$ is anti-complete to $T$; 
\item $c$ is complete to $L$ and anti-complete to $A \cup T$. 
\end{itemize} 
Let $G_2$ be a graph with vertex-set $V_{G_2} = B \cup C \cup L \cup T \cup \{a\}$, and adjacency as follows: 
\begin{itemize} 
\item $G_2[L \cup T] = G_1[L \cup T]$; 
\item $T$ is anti-complete to $C$; 
\item $L$ is complete to $B \cup C$; 
\item $a$ is complete to $B$ and anti-complete to $C \cup L \cup T$. 
\end{itemize} 
Under these circumstances, we say that $(G_1,G_2)$ is a {\em composable pair}. 
The {\em split graph unification} of a composable pair $(G_1,G_2)$ is the graph $G$ with vertex-set $A \cup B \cup C \cup L \cup T$ such that: 
\begin{itemize} 
\item $G[A \cup L \cup T] = G_1 \smallsetminus c$; 
\item $G[B \cup C \cup L \cup T] = G_2 \smallsetminus a$; 
\item $A$ is complete to $B$ and anti-complete to $C$ in $G$. 
\end{itemize} 
\noindent 
Thus to obtain $G$ from $G_1$ and $G_2$, we ``glue'' $G_1$ and $G_2$ along their common induced subgraph $G_1[L \cup T]=G_2[L \cup T]$, and this induced subgraph is a split graph (hence the name of the operation). 
\\
\\
We say that a graph $G$ is {\em obtained by split graph unification from graphs with property $P$} provided that there exists a composable pair $(G_1,G_2)$ such that $G_1$ and $G_2$ both have property $P$, and $G$ is the split graph unification of $(G_1,G_2)$. We say that $G$ is {\em obtained by split graph unification in the complement from graphs with property $P$} provided that $\overline{G}$ is obtained by split graph unification from graphs with property $P$. 
\\
\\
We now prove that every graph that admits a split graph divide is obtained by split graph unification from smaller graphs. 
\begin{theorem} \label{divuni} If a graph admits a split graph divide, then it is obtained from a composable pair of smaller graphs by split graph unification.
\end{theorem}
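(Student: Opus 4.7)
The proof is essentially a routine verification; the construction is forced by the definitions. Suppose $G$ admits a split graph divide $(A,B,C,L,T)$. Pick two vertices $a,c \notin V_G$, and define $G_1$ to be the graph on vertex-set $A \cup L \cup T \cup \{c\}$ with $G_1[A \cup L \cup T] = G[A \cup L \cup T]$, and with $c$ complete to $L$ and anti-complete to $A \cup T$. Define $G_2$ analogously on vertex-set $B \cup C \cup L \cup T \cup \{a\}$, with $G_2[B \cup C \cup L \cup T] = G[B \cup C \cup L \cup T]$ and $a$ complete to $B$ and anti-complete to $C \cup L \cup T$.

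Next, I would verify that $(G_1, G_2)$ is a composable pair by running through the bullet points in the definition from section \ref{section:split-graph-unification}. The conditions that $A$ and $C$ are non-empty follow from $|A| \geq 2$ and $|C| \geq 2$; $L$ is a clique and $T$ is a stable set by the split graph divide conditions; $A$ is anti-complete to $T$ since $A$ is anti-complete to $C \cup T$ in $G$; the adjacencies involving $c$ in $G_1$ hold by construction. For $G_2$, the identity $G_2[L \cup T] = G_1[L \cup T]$ is automatic because both equal $G[L \cup T]$; the relations ``$T$ anti-complete to $C$'' and ``$L$ complete to $B \cup C$'' come directly from the split graph divide definition; and the adjacencies involving $a$ are by construction.

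Then I would check that $G$ is the split graph unification of $(G_1, G_2)$. The equalities $G[A \cup L \cup T] = G_1 \smallsetminus c$ and $G[B \cup C \cup L \cup T] = G_2 \smallsetminus a$ hold by construction, and $A$ is complete to $B$ and anti-complete to $C$ in $G$ by the split graph divide property. Finally, $G_1$ and $G_2$ are smaller than $G$: we have $|V_{G_1}| = |A| + |L| + |T| + 1 < |V_G|$ since $|B \cup C| \geq |C| \geq 2$, and $|V_{G_2}| = |B| + |C| + |L| + |T| + 1 < |V_G|$ since $|A| \geq 2$.

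There is no real obstacle here, since no induced-subgraph forbiddenness needs to be verified (the definitions of composable pair and split graph unification are purely structural, and the class $\{P_5, \overline{P_5}, C_5\}$-free plays no role in this lemma). The only mild point to be careful about is that the split graph divide condition ``every vertex of $L$ is mixed on $A$'' is \emph{not} required in the definition of a composable pair, so one simply ignores it; conversely, the split graph divide guarantees $|A|, |C| \geq 2$, which is stronger than the composable-pair requirement that $A$ and $C$ be non-empty and is precisely what gives ``smaller.''
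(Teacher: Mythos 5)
Your construction of $G_1$ and $G_2$ (adjoining $c$ complete to $L$ and anti-complete to $A\cup T$, and $a$ complete to $B$ and anti-complete to $C\cup L\cup T$) is exactly the paper's, and your verification that $(G_1,G_2)$ is a composable pair, that $G$ is its split graph unification, and that $|C|\geq 2$ and $|A|\geq 2$ make both graphs smaller matches the paper's argument. The proposal is correct and takes essentially the same approach.
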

\begin{proof}
Let $G$ be a graph that admits a split graph divide, and let $(A,B,C,L,T)$ be a split graph divide of $G$. Let $G_1$ be the graph obtained from $G[A\cup L \cup T]$ by adding a new vertex $c$ complete to $L$ and anticomplete to $A \cup T$. Since $|C| \geq 2$, we know that $|V_{G_1}|<|V_G|$. Let $G_2$ be obtained from $G[B \cup C \cup L \cup T]$ by adding a new vertex $a$ complete to $B$ and anticomplete to $C \cup L \cup T$. Since $|A| \geq 2$, we know that $|V_{G_2}|<|V_G|$. Now $(G_1,G_2)$ is a composable pair, and $G$ is obtained from it by split graph unification. This completes the argument.
\end{proof} 
\noindent 
Note that in the proof of~\ref{divuni}, $G_1$ is  obtained from $G$ by first deleting $B$, and then ``shrinking'' $C$ to a vertex $c$. On the other hand, $G_2$ is obtained from $G$ by first deleting all the edges between $A$ and $L$ and then ``shrinking'' $A$ to a vertex $a$. Thus,  $G_1$ is (isomorphic to) an induced subgraph of $G$, but $G_2$ need not be. 
\\
\\
Split graph unification can be thought of as generalized substitution. Indeed, we obtain the graph $G$ from $G_1$ and $G_2$ by first substituting $G_1[A]$ for $a$ in $G_2$, and then ``reconstructing'' the adjacency between $A$ and $L$ in $G$ using the adjacency between $A$ and $L$ in $G_1$. We include $T$ and $c$ in $G_1$ in order to ensure that split graph unification preserves the property of being $\{P_5,\overline{P_5},C_5\}$-free. In fact, we prove something stronger than this: split graph unification preserves the (individual) properties of being $P_5$-free, $\overline{P_5}$-free, and $C_5$-free. 
\begin{theorem} \label{composition-free} Let $(G_1,G_2)$ be a composable pair, and let $G$ be the split graph unification of $(G_1,G_2)$. Then: 
\begin{itemize} 
\item if $G_1$ and $G_2$ are $P_5$-free, then $G$ is $P_5$-free; 
\item if $G_1$ and $G_2$ are $\overline{P_5}$-free, then $G$ is $\overline{P_5}$-free; 
\item if $G_1$ and $G_2$ are $C_5$-free, then $G$ is $C_5$-free. 
\end{itemize} 
\end{theorem}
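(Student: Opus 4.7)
The plan is to prove all three statements by a uniform contradiction argument. For each $H \in \{P_5, \overline{P_5}, C_5\}$, I would assume $G_1$ and $G_2$ are $H$-free, suppose for contradiction that $G$ contains an induced copy of $H$ on a vertex set $V_H$, and produce such a copy in $G_1$ or $G_2$. Partition $V_H$ as $A_H = V_H \cap A$, $B_H = V_H \cap B$, $C_H = V_H \cap C$, $L_H = V_H \cap L$, $T_H = V_H \cap T$. Two cases are immediate. If $A_H = \emptyset$, then $V_H \subseteq B \cup C \cup L \cup T$, and because $G[B \cup C \cup L \cup T] = G_2 \smallsetminus a$, the graph $G_2$ already contains an induced $H$. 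Symmetrically, if $B_H \cup C_H = \emptyset$, then $V_H \subseteq A \cup L \cup T$ and $G_1$ contains $H$. Either outcome contradicts the hypothesis.

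The main tool for the remaining case ($A_H \neq \emptyset$ and $B_H \cup C_H \neq \emptyset$) is a pair of substitution moves. The \emph{$a$-substitution}: if $|A_H| = 1$ with $A_H = \{v\}$ and $v$ has no neighbor in $V_H \cap L$, then $(V_H \smallsetminus \{v\}) \cup \{a\}$ induces in $G_2$ a copy of $H$. The reason is that $a$ in $G_2$ is complete to $B$ and anti-complete to $C \cup L \cup T$, matching the adjacency of $v$ in $G$ with every part except possibly $L$; the hypothesis excludes that exception. The \emph{$c$-substitution}: if $|C_H| = 1$ with $C_H = \{w\}$ and $V_H \cap B = \emptyset$, then $(V_H \smallsetminus \{w\}) \cup \{c\} \subseteq V_{G_1}$ induces in $G_1$ a copy of $H$; no extra hypothesis is needed here because $c$ in $G_1$ and every vertex of $C$ in $G$ have identical adjacency with $A \cup L \cup T$ (both complete to $L$, anti-complete to $A \cup T$).

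What remains is to verify that, for each $H$, every configuration in the remaining case either triggers one of these two substitutions or is ruled out by the rigid composable-pair adjacencies. For $P_5 = p_0p_1p_2p_3p_4$, the key points are: (i) every vertex of $A_H$ is a common $G$-neighbor of every vertex of $B_H$, and since $P_5$ has maximum degree two this forces $|A_H|, |B_H| \leq 2$ when both are non-empty, and rules out $|A_H| = |B_H| = 2$; (ii) $L$ is complete to $B \cup C$ and $L$ is a clique in $G$, so any pair in $L_H \times (B_H \cup C_H)$ and any pair inside $L_H$ is a $G$-edge and so must be consecutive in the path, which in particular forces $|L_H| \leq 1$ whenever $C_H \neq \emptyset$; (iii) the anti-complete relations $A$-$C$, $A$-$T$, $C$-$T$ forbid most sequences of parts along the path. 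A short positional enumeration using (i)-(iii) shows that when $|A_H| = 1$ and $B_H \neq \emptyset$ the unique $A$-vertex has no $L$-neighbor in $V_H$ (so the $a$-substitution applies); when $B_H = \emptyset$ and $C_H \neq \emptyset$ one is forced to $|C_H| = 1$ (so the $c$-substitution applies); and the remaining configurations, such as $(|A_H|, |B_H|) = (2, 1)$ or $|A_H| \geq 3$ together with $C_H \neq \emptyset$, admit no valid assignment of parts for the unfilled vertices of the path. The arguments for $\overline{P_5}$ (the house) and $C_5$ follow the same template, with the degree bounds and forced-edge computations adapted to each graph. The main obstacle throughout is the volume of this case analysis, especially for the denser graph $\overline{P_5}$; no new conceptual ingredient beyond the two substitution moves and the rigidity of the composable-pair structure appears to be needed.
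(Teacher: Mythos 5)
Your framework (contradiction, partitioning the copy of $H$ across $A,B,C,L,T$, and the two substitution moves) is sound, and both moves are correctly justified: the $a$-substitution works because $a$ in $G_2$ matches the adjacency of a single $A$-vertex with no $L$-neighbor in the copy, and the $c$-substitution works because $c$ in $G_1$ matches the adjacency of any $C$-vertex to $A \cup L \cup T$. For $P_5$ and $C_5$ the enumeration you outline, resting on maximum degree two and triangle-freeness, can indeed be completed. The genuine gap is the house, which you dismiss as ``the same template adapted to each graph.'' Points (i)--(iii) are exactly the facts that fail for $\overline{P_5}$ (it has a triangle and two vertices of degree three), and the configurations that survive your two moves are precisely the hard ones. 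Concretely: if $|A_H|=1$ with $A_H=\{v\}$, $B_H \neq \emptyset$, and $v$ has a neighbor $l$ in $L_H$, neither move applies and no single adjacency constraint kills the configuration --- $\{v,b,l\}$ is a triangle and two further vertices remain to be placed. Excluding it requires the argument the paper gives: a house has a unique triangle, which forces $|B_H|=|L_H|=1$ and $|C_H \cup T_H|=2$; the two vertices outside the triangle of a house are adjacent, hence both lie in $C$ (as $T$ is stable and anti-complete to $C$), and together with $l$ they form a second triangle. Likewise the case $|A_H|=2$, $B_H \neq \emptyset$ for the house needs the observation that some vertex of the copy must be mixed on $A_H$ (else $A_H$ is a proper homogeneous set in the prime graph $H$), followed again by triangle counting. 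None of this is a routine adaptation of your $P_5$ computation, so as written the proposal does not prove the second bullet of the statement.

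A separate, smaller point: you are re-deriving by hand something the paper gets for free. Since $H$ is prime, the class of $H$-free graphs is closed under substitution, so $G \smallsetminus B$ (obtained by substituting $G_2[C]$ for $c$ in $G_1$) and $G \smallsetminus L$ (obtained by substituting $G_1[A]$ for $a$ in $G_2 \smallsetminus L$) are $H$-free with no restriction on $|C_H|$ or $|A_H|$; together with the fact that $G \smallsetminus A$ is an induced subgraph of $G_2$ and an anti-connectivity argument for $C \cup T$, this shows at once that the copy must meet each of $A$, $B$, $L$ and $C \cup T$, whence $1 \leq |W \cap A| \leq 2$ and only two cases remain. Adopting that step would collapse most of your positional enumeration and leave exactly the two house arguments above to be written out.
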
 
\begin{proof} 
Let $H \in \{P_5,\overline{P_5},C_5\}$, and suppose that $G_1$ and $G_2$ are $H$-free. We need to show that $G$ is $H$-free. Suppose otherwise. Fix $W \subseteq V_G$ such that $G[W] \cong H$. Let $A,B,C,L,T,a,c$ be as in the definition of a composable pair. Since $H$ is prime, the class of $H$-free graphs is closed under substitution. Now, since $G_1$ and $G_2$ are $H$-free, and since $G \smallsetminus B$ is obtained by substituting $G_2[C]$ for $c$ in $G_1$, we know that $G \smallsetminus B$ is $H$-free. Thus, $W \cap B \neq \emptyset$. Next, since $G \smallsetminus A$ is an induced subgraph of $G_2$, and $G_2$ is $H$-free, we know that $W \cap A \neq \emptyset$. Since $G_1$ and $G_2$ are $H$-free, and $G \smallsetminus L$ is obtained by substituting $G_1[A]$ for $a$ in $G_2 \smallsetminus L$, we know that $W \cap L \neq \emptyset$. Since $B$ is complete to $A \cup L$ in $G$, and since $W$ intersects both $B$ and $A \cup L$ in $G$, we know that $G[W \cap (A \cup B \cup L)]$ is not anti-connected. Since $H$ is anti-connected, we know that $W \not\subseteq A \cup B \cup L$; thus, $W \cap (C \cup T) \neq \emptyset$. Since $W$ intersects each of $A$, $B$, $L$, and $C \cup T$, and since $|W| = 5$, we know that $1 \leq |W \cap A| \leq 2$. 
\\
\\
Suppose first that $|W \cap A| = 2$; set $W \cap A = \{a_1,a_2\}$. Since $W$ intersects each of $B$, $L$, and $C \cup T$, and since $|W| = 5$, it follows that $|W \cap B| = |W \cap L| = |W \cap (C \cup T)| = 1$. Set $W \cap B = \{b\}$, $W \cap L = \{l\}$, and $W \cap (C \cup T) = \{w\}$. Then $W = \{a_1,a_2,b,l,w\}$. Since $G[W]$ is prime, $\{a_1,a_2\}$ cannot be a proper homogeneous set in $G[W]$; since $b$ is complete to $A$ and $w$ is anti-complete to $A$ in $G$, we know that $l$ is mixed on $\{a_1,a_2\}$. By symmetry, we may assume that $l$ is adjacent to $a_1$ and non-adjacent to $a_2$. Then $\{a_1,b,l\}$ is a triangle in $G[W]$, and consequently, $H$ is a house. Thus, $a_2$ must have at least two neighbors in $G[W]$. Since $a_2$ is non-adjacent to $l$ (by assumption) and to $w$ (because $A$ is anti-complete to $C \cup T$), it follows that $a_2$ is adjacent to $a_1$ and $b$. But now $\{a_1,a_2,b\}$ and $\{a_1,b,l\}$ are two distinct triangles in the house $G[W]$, contrary to the fact that a house has only one triangle. 
\\
\\
It remains to consider the case when $|W \cap A| = 1$. Set $W \cap A = \{\hat{a}\}$. If $\hat{a}$ is anti-complete to $W \cap L$, then $G[W]$ is an induced subgraph of $G_2$, contrary to the fact that $G_2$ is $H$-free. Thus, $\hat{a}$ has a neighbor $l \in L$. Since $W \cap B \neq \emptyset$, there exists some $b \in W \cap B$. Since $B$ is complete to $A \cup L$ in $G$, we know that $\{\hat{a},b,l\}$ is a triangle in $G[W]$, and consequently, $H$ is a house. Now, suppose that $|W \cap B| \geq 2$, and fix some $b' \in (W \cap B) \smallsetminus \{b'\}$. But then $\{\hat{a},b,l\}$ and $\{\hat{a},b',l\}$ are distinct triangles in the house $G[W]$, contrary to the fact that a house contains only one triangle. Thus, $W \cap B = \{b\}$. Next, suppose that $|W \cap L| \geq 2$, and fix some $l' \in W \cap L$. But then since $L$ is a clique in $G$, and since $B$ is complete to $L$ in $G$, we know that $\{b,l,l'\}$ is a triangle in $G[W]$, and so $G[W]$ contains at least two triangles (namely $\{\hat{a},b,l\}$ and $\{b,l,l'\}$), contrary to the fact that $G[W]$ is a house. Thus, $W \cap L = \{l\}$. It now follows that $|W \cap (C \cup T)| = 2$; set $W \cap (C \cup T) = \{c_1,c_2\}$. Since $\{\hat{a},b,l\}$ is the unique triangle of the house $G[W]$, we know that $c_1,c_2$ is an edge; since $T$ is a stable set in $G$, and since $C$ is anti-complete to $T$ in $G$, this implies that $c_1,c_2 \in C$. Since $c_1c_2$ is an edge in $G[W]$, and since $C$ is complete to $L$ in $G$, we know that $\{c_1,c_2,l\}$ is a triangle in $G[W]$. But now the house $G[W]$ contains two distinct triangles (namely $\{\hat{a},b,l\}$ and $\{c_1,c_2,l\}$), which is impossible. This completes the argument. 
\end{proof} 
\noindent 
We now prove a partial converse of \ref{composition-free}. 
\begin{theorem} \label{components-free} Let $(G_1,G_2)$ be a composable pair, let $A,B,C,L,T,a,c$ be as in the definition of a composable pair, and let $G$ be the split graph unification of $(G_1,G_2)$. Then: 
\begin{itemize} 
\item if $G$ is $P_5$-free, then $G_1$ and $G_2$ are $P_5$-free; 
\item if $G$ is $\overline{P_5}$-free, and every vertex of $L$ has a non-neighbor in $A$ in $G$, then $G_1$ and $G_2$ are $\overline{P_5}$-free; 
\item if $G$ is $C_5$-free, then $G_1$ and $G_2$ are $C_5$-free. 
\end{itemize} 
\end{theorem}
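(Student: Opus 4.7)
The plan is to prove each implication by contrapositive: assuming $G_1$ (respectively, $G_2$) contains an induced copy of $H \in \{P_5, \overline{P_5}, C_5\}$, I will exhibit an induced copy of $H$ in $G$.

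For the $G_1$ direction, I would use that $C$ is non-empty in any composable pair. Picking any $c_0 \in C$, the vertex $c_0$ in $G$ is complete to $L$ and anti-complete to $A \cup T$, matching the adjacencies of $c$ in $G_1$. Hence $G[A \cup L \cup T \cup \{c_0\}] \cong G_1$, so $G_1$ embeds as an induced subgraph of $G$, and any forbidden $H$ in $G_1$ transfers directly. This handles all three items on the $G_1$ side with no extra hypothesis.

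For the $G_2$ direction, let $W \subseteq V_{G_2}$ induce a copy of $H$. If $a \notin W$, then $G_2[W]$ is an induced subgraph of $G_2 \smallsetminus a = G[B \cup C \cup L \cup T]$, which is itself an induced subgraph of $G$, and we are done. So suppose $a \in W$ and set $W' = W \smallsetminus \{a\}$. Since $a$ is complete to $B$ and anti-complete to $C \cup L \cup T$ in $G_2$, the neighbors of $a$ in $W'$ lie in $B$ and the non-neighbors in $C \cup L \cup T$. The strategy is to pick $a' \in A$ such that $G[W' \cup \{a'\}] \cong H$. Any $a' \in A$ is automatically complete to $W' \cap B$ and anti-complete to $W' \cap (C \cup T)$; the only possible mismatch is on $W' \cap L$, where $a'$ might be adjacent to some vertex that $a$ was not.

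The key structural claim is that for $H \in \{P_5, C_5\}$ we have $W' \cap L = \emptyset$, while for $H = \overline{P_5}$ we have $|W' \cap L| \leq 1$. The argument rests on a single observation: any $l \in W' \cap L$ is complete to $W' \cap B$ (as $L$ is complete to $B$ in $G_2$), so $l$ must be adjacent to every $B$-neighbor of $a$ in the induced copy of $H$. For $C_5$, each of the two non-neighbors of $a$ in the cycle is non-adjacent to one of $a$'s two $B$-neighbors, giving an immediate contradiction. For $P_5$, this reasoning rules out most cases, and the remaining sub-cases (e.g.\ when $a$ is an endpoint and the second vertex after $a$ lies in $L$) are eliminated by chasing the adjacencies of the last two vertices of the path through $C$ and $T$, using that $L$ is complete to $C$, that $T$ is anti-complete to $C$, and that $T$ is a stable set. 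For $H = \overline{P_5}$, a similar case analysis on the five possible positions of $a$ in the house, combined with the fact that $L$ is complete to $B$, rules out two vertices of $W'$ lying simultaneously in $L$.

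Once this structural claim is established, the conclusion is immediate. For $P_5$ and $C_5$, any $a' \in A$ (non-empty by the definition of a composable pair) works. For $\overline{P_5}$, if $W' \cap L = \emptyset$ then any $a' \in A$ works, and if $W' \cap L = \{l\}$ then the extra hypothesis that every vertex of $L$ has a non-neighbor in $A$ yields some $a' \in A$ non-adjacent to $l$; in either case, $G[W' \cup \{a'\}]$ is the desired induced copy of $H$ in $G$. The main obstacle I anticipate is the bookkeeping in the house case: one must enumerate the positions of $a$ and chase adjacencies through $B$, $C$, $L$, and $T$, though each individual sub-case is short.
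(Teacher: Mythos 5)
Your proof is correct, and the underlying case analysis --- how a copy of $H$ in $G_2$ passing through $a$ can meet $B$, $L$, and $C \cup T$ --- is essentially the one the paper carries out; but the logical organization is genuinely different. The paper argues by contradiction: assuming $G$ is $H$-free, it first forces the copy $W$ to contain $a$, to meet $B$ and $C \cup T$, and to meet $L$ (in the house case, to satisfy $|W \cap L| \geq 2$, which is where the extra hypothesis enters: it guarantees that $G_2 \smallsetminus (L \smallsetminus \{l\})$ embeds into $G$), and only then eliminates the surviving configurations. You instead prove an unconditional structural fact --- a copy of $P_5$ or $C_5$ in $G_2$ through $a$ avoids $L$, and a house through $a$ meets $L$ in at most one vertex --- and then explicitly re-embed the copy into $G$ by swapping $a$ for a vertex $a' \in A$, invoking the extra hypothesis only at the very end to choose $a'$ non-adjacent to the unique $L$-vertex. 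Your route makes the role of that hypothesis more transparent and yields slightly more information (it locates the copy of $H$ inside $G$), at the cost of verifying the upper bounds on $|W \cap L|$ by hand rather than obtaining the complementary lower bounds for free from the $H$-freeness of $G$. The deferred details all check out: the $C_5$ case is immediate since each non-neighbor of $a$ on the cycle misses one of $a$'s two $B$-neighbors; the $P_5$ endpoint sub-case forces the tail of the path into $C$ and hence a triangle with the $L$-vertex; and for the house, $|W \cap L| \geq 2$ forces $|W \cap B| \geq 2$ and hence $W = \{a\} \cup (W \cap B) \cup (W \cap L)$, which has at most three non-edges while a house needs four.
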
 
\begin{proof} 
Let $H \in \{P_5,\overline{P_5},C_5\}$, and suppose that $G$ is $H$-free. If $H = \overline{P_5}$, we additionally assume that every vertex of $L$ has a non-neighbor in $A$ in $G$ (and consequently, in $G_1$ as well). We need to show that $G_1$ and $G_2$ are both $H$-free. Clearly, $G_1$ is an induced subgraph of $G$, and consequently, $G_1$ is $H$-free. It remains to show that $G_2$ is $H$-free. Suppose otherwise. Fix some $W \subseteq V_{G_2}$ such that $G[W] \cong H$. 
\\
\\
First, we claim that $a \in W$, and that $W$ intersects each of $B$, $L$, and $C \cup T$. Since $G_2 \smallsetminus a$ is an induced subgraph of $G$, and $G$ is $H$-free, we know that $a \in W$. Next, since $|W| = 5$, since $G[W]$ is connected, since $a \in W$, and since all neighbors of $a$ in $G_2$ lie in $B$, we know that $W \cap B \neq \emptyset$. Since $G_2 \smallsetminus L$ is (isomorphic to) an induced subgraph of $G$, we know that $W \cap L \neq \emptyset$. Since $\{a\} \cup L$ is complete to $B$ in $G_2$, and since $W$ intersects both $\{a\} \cup L$ and $B$, we know that $G[W \cap (\{a\} \cup B \cup L)]$ is not anti-connected. Since $G[W]$ is anti-connected, it follows that $W \not\subseteq \{a\} \cup B \cup L$. Consequently, $W \cap (C \cup T) \neq \emptyset$. This proves our claim. 
\\
\\
Now, we deal with the following two cases separately: when $H = \overline{P_5}$, and when $H \in \{P_5,C_5\}$. 
\\
\\
Suppose first that $H = \overline{P_5}$. Then by assumption, every vertex in $L$ has a non-neighbor in $A$ in $G$, and it follows that for all $l \in L$, $G_2 \smallsetminus (L \smallsetminus \{l\})$ is (isomorphic to) an induced subgraph of $G$. Thus, $|W \cap L| \geq 2$. Since $|W| = 5$, it follows that $a \in W$, $|W \cap L| = 2$, and $|W \cap B| = |W \cap (C \cup T)| = 1$. Since all neighbors of $a$ in $G_2$ lie in $B$, and since $|W \cap B| = 1$, it follows that $a$ has at most one neighbor in $G[W]$. But this is impossible because $G[W]$ is a house, and every vertex of a house is of degree at least two. 
\\
\\
It remains to consider the case when $H \in \{P_5,C_5\}$. Fix $b \in W \cap B$ and $l \in W \cap L$. Since $a$ is complete to $B$ and anti-complete to $L$ in $G_2$, and since $B$ is complete to $L$ in $G_2$, we know that $a-b-l$ is an induced path in $G_2[W]$. 
\\
\\
We claim that $W \cap B = \{b\}$. Suppose otherwise; fix $b' \in (W \cap B) \smallsetminus \{b\}$. Then $a-b-l-b'-a$ is a (not necessarily induced) square in $G[W]$, which is impossible because $G[W]$ is either a four-edge path or a pentagon. Thus, $W \cap B = \{b\}$. Next, we claim that $W \cap L = \{l\}$. Suppose otherwise; fix $l' \in (W \cap L) \smallsetminus \{l\}$. Since $L$ is a clique in $G_2$, and since $B$ is complete to $L$ in $G_2$, it follows that $\{b,l,l'\}$ is a triangle in $G[W]$. But this is impossible since $G[W]$ is either a four-edge path or a pentagon. This proves that $W \cap L = \{l\}$. 
\\
\\
Since all neighbors of $a$ in $G_2$ lie in $B$, and since $W \cap B = \{b\}$, we know that $a$ is of degree at most one in $G[W]$. Consequently, $G[W]$ is a four-edge path; in fact, the four-edge path $G[W]$ must be of the form $a-b-l-c_1-c_2$, where $c_1,c_2 \in W \cap (C \cup T)$. Since $c_1c_2$ is an edge, and since $T$ is a stable set that is anti-complete to $C$ in $G_2$, it follows that $c_1,c_2 \in C$. But $C$ is complete to $L$ in $G_2$, and so $\{c_1,c_2,l\}$ is a triangle in $G_2[W]$, which is impossible since $G_2[W]$ is a four-edge path. This completes the argument. 
\end{proof} 
\noindent 
We remark that the additional assumption in the second statement of \ref{components-free} is needed because of the following example. Let $G_1$ be a path $a_1-l-c$, and let  $G_2$ be a house $b_1-b_2-c_1-a-l$. Set $A = \{a_1\}$, $B = \{b_1,b_2\}$, $C = \{c_1\}$, $L = \{l\}$, and $T = \emptyset$. With this setup, $(G_1,G_2)$ is easily seen to be a composable pair. Let  $G$ be the split graph unification of $(G_1,G_2)$. It is easy to check that  $G$ is $\overline{P_5}$-free, even though $G_2$ is a house. 
\\
\\
We complete this section with a strengthening of \ref{divuni}, which we will need in section \ref{section:main-thm}. 
\begin{theorem} \label{divuni-free} Let $H \in \{P_5,\overline{P_5},C_5\}$, and let $G$ be an $H$-free graph that admits a split graph divide. Then $G$ is obtained from a composable pair of smaller $H$-free graphs by split graph unification. 
\end{theorem}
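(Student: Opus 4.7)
The plan is to apply \ref{divuni} directly to obtain a composable pair $(G_1,G_2)$ of smaller graphs whose split graph unification is $G$, and then verify the $H$-free property of $G_1$ and $G_2$ by appealing to \ref{components-free}. The construction in \ref{divuni} uses the given split graph divide $(A,B,C,L,T)$ of $G$ in a very transparent way: $G_1$ is (isomorphic to) the induced subgraph $G[A \cup L \cup T]$ with an added vertex $c$ complete to $L$ and anticomplete to $A \cup T$, and $G_2$ is built on $B \cup C \cup L \cup T \cup \{a\}$ with adjacency mirroring $G$ between $B,C,L,T$ and $a$ complete to $B$ and anticomplete to $C \cup L \cup T$. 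Both $G_1$ and $G_2$ are smaller than $G$ because $|A|, |C| \geq 2$.

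For $H = P_5$ and $H = C_5$, the result then follows immediately: the first and third bullets of \ref{components-free} give that $G_1$ and $G_2$ are $H$-free with no additional hypotheses. So essentially nothing remains to do in these two cases beyond quoting the two earlier results in sequence.

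The only case requiring any thought is $H = \overline{P_5}$. Here \ref{components-free} comes with an extra hypothesis, namely that every vertex of $L$ has a non-neighbor in $A$ in $G$. I would point out that this hypothesis is automatically delivered by the definition of a split graph divide, which explicitly requires every vertex of $L$ to be \emph{mixed} on $A$, and in particular to have a non-neighbor in $A$. Thus the extra assumption of the second bullet of \ref{components-free} holds automatically in our setting, and we may conclude that $G_1$ and $G_2$ are $\overline{P_5}$-free.

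There is no real obstacle here; the lemma is essentially the concatenation of \ref{divuni} and \ref{components-free}, with the small but crucial observation that the mixedness condition built into the definition of a split graph divide is exactly what is needed to unlock the $\overline{P_5}$ case of \ref{components-free}. The only thing worth being explicit about in the write-up is matching up the sets $A, B, C, L, T$ (and the dummy vertices $a, c$) produced by \ref{divuni} with those in the hypothesis of \ref{components-free}, so that the reader sees why the mixedness of $L$ on $A$ in $G$ translates into the hypothesis ``every vertex of $L$ has a non-neighbor in $A$'' required by \ref{components-free}.
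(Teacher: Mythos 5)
Your proposal is correct and follows exactly the paper's own argument: construct $(G_1,G_2)$ as in \ref{divuni}, then apply \ref{components-free}, noting that the mixedness of every vertex of $L$ on $A$ (built into the definition of a split graph divide) supplies the extra hypothesis needed for the $\overline{P_5}$ case. Nothing is missing.
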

\begin{proof}
Let $(A,B,C,L,T)$ be a split graph divide of $G$, and let $G_1$ and $G_2$ be constructed as in the proof of \ref{divuni}. As shown in the proof of \ref{divuni}, $G_1$ and $G_2$ are both smaller than $G$, $(G_1,G_2)$ is a composable pair, and $G$ is the split graph unification of $(G_1,G_2)$. Further, since $(A,B,C,D,L,T)$ is a split graph divide of $G$, we know that every vertex in $L$ is mixed on $A$ in $G$, and in particular, that every vertex of $L$ has a non-neighbor in $A$ in $G$. By \ref{components-free} then, $G_1$ and $G_2$ are both $H$-free. 
\end{proof} 

\section{The main theorem} \label{section:main-thm} 

In this section, we use \ref{Fouquet} and the results of sections \ref{section:split-graph-divide} and \ref{section:split-graph-unification} to prove \ref{P5-P5c}, the main theorem of this paper. 
\begin{theorem} \label{P5-P5c} A graph $G$ is $\{P_5,\overline{P_5}\}$-free if and only if at least one of the following holds: 
\begin{itemize} 
\item $G$ is a split graph; 
\item $G$ is a pentagon; 
\item $G$ is obtained by substitution from smaller $\{P_5,\overline{P_5}\}$-free graphs; 
\item $G$ or $\overline{G}$ is obtained by split graph unification from smaller $\{P_5,\overline{P_5}\}$-free graphs. 
\end{itemize} 
\end{theorem}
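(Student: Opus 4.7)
The plan is to assemble the results already established earlier in the paper, treating the two directions of the biconditional separately.

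For the ``if'' direction, one checks each of the four constructions individually. Split graphs are $\{P_5,\overline{P_5}\}$-free (as noted in the introduction), and a pentagon contains no induced $P_5$ or $\overline{P_5}$ by inspection (since any induced subgraph of $C_5$ on five vertices is $C_5$ itself, which is neither $P_5$ nor $\overline{P_5}$). Since $P_5$ and $\overline{P_5}$ are both prime, the class of $\{P_5,\overline{P_5}\}$-free graphs is closed under substitution, exactly as used in the proof of \ref{composition-free}. Finally, \ref{composition-free} shows that split graph unification preserves each of $P_5$-freeness and $\overline{P_5}$-freeness individually, and passing to complements disposes of the fourth case.

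For the ``only if'' direction, I would induct on $|V_G|$, with trivial base cases for small graphs (which are split). Given a $\{P_5,\overline{P_5}\}$-free graph $G$, apply \ref{Fouquet}: either $G$ contains a proper homogeneous set, or $G \cong C_5$, or $G$ is $C_5$-free. In the first case, fixing such a proper homogeneous set $X$, the graph $G$ is obtained by substituting $G[X]$ into the quotient graph in which $X$ is collapsed to a single representative vertex; both of these smaller graphs are isomorphic to induced subgraphs of $G$, hence $\{P_5,\overline{P_5}\}$-free, so by the inductive hypothesis we are done. The second case is the pentagon. In the third case, $G$ is $\{P_5,\overline{P_5},C_5\}$-free, so \ref{decomposition} applies: either $G$ is split, or $G$ contains a proper homogeneous set (which reduces to the previous case), or one of $G$ and $\overline{G}$ admits a split graph divide, in which case \ref{divuni-free} expresses that graph as the split graph unification of a composable pair of smaller graphs, completing the induction.

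The only real obstacle is a minor bookkeeping point: \ref{divuni-free} is stated for one $H \in \{P_5,\overline{P_5},C_5\}$ at a time, but since the composable pair constructed in the proof of \ref{divuni} depends only on the split graph divide (not on $H$), I would invoke \ref{divuni-free} three times with the same divide, once for each forbidden subgraph, to conclude that the resulting pair is simultaneously $\{P_5,\overline{P_5},C_5\}$-free, which is strictly stronger than what is needed. Beyond this, no new combinatorial argument is required; the theorem is essentially a clean packaging of \ref{Fouquet}, \ref{decomposition}, \ref{composition-free}, and \ref{divuni-free}.
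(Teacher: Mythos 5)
Your proposal is correct and follows essentially the same route as the paper: the ``if'' direction via primality of $P_5$ and $\overline{P_5}$ plus \ref{composition-free}, and the ``only if'' direction by combining \ref{Fouquet}, \ref{decomposition}, and \ref{divuni-free}. The induction on $|V_G|$ is superfluous for the statement as given (the homogeneous-set case already yields the third bullet directly, since the two smaller graphs are induced subgraphs of $G$ and hence $\{P_5,\overline{P_5}\}$-free), and your remark about invoking \ref{divuni-free} once per forbidden subgraph with the same divide matches what the paper does implicitly.
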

\begin{proof}
We first prove the ``if'' part. If $G$ is a split graph or a pentagon, then it is clear that $G$ is $\{P_5,\overline{P_5}\}$-free. Since both $P_5$ and $\overline{P_5}$ are  prime, we know that the class of $\{P_5,\overline{P_5}\}$-free graphs is closed under substitution, and consequently, any graph obtained by substitution from smaller $\{P_5,\overline{P_5}\}$-free graphs is $\{P_5,\overline{P_5}\}$-free. Finally, if $G$ or $\overline{G}$ is obtained by split graph unification from smaller $\{P_5,\overline{P_5}\}$-free graphs, then the fact that $G$ is $\{P_5,\overline{P_5}\}$-free follows from \ref{composition-free} and from the fact that the complement of a $\{P_5,\overline{P_5}\}$-free graph is again $\{P_5,\overline{P_5}\}$-free. 
\\
\\
For the ``only if'' part, suppose that $G$ is a $\{P_5,\overline{P_5}\}$-free graph. We may assume that $G$ is prime, for otherwise, $G$ is obtained by substitution from smaller $\{P_5,\overline{P_5}\}$-free graphs, and we are done. If some induced subgraph of $G$ is isomorphic to the pentagon, then by~\ref{Fouquet}, $G$ is  a pentagon, and again we are done. Thus we may assume that $G$ is $\{P_5,\overline{P_5},C_5\}$-free. By \ref{decomposition}, we know that either $G$ is a split graph, or one of $G$ and $\overline{G}$ admits a split graph divide. In the former case, we are done. In the latter case, \ref{divuni-free} implies that $G$ or $\overline{G}$ is the split graph unification of a composable pair of smaller $\{P_5,\overline{P_5},C_5\}$-free graphs, and again we are done. 
\end{proof} 
\noindent 
As an immediate corollary of \ref{P5-P5c}, we have the following. 
\begin{theorem}\label{cor}
A graph is $\{P_5,\overline{P_5}\}$-free if and only if it is obtained from pentagons and split graphs by repeated substitutions, split graph unifications, and split graph unifications in the complement.
\end{theorem}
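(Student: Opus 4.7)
The plan is to derive Theorem \ref{cor} from Theorem \ref{P5-P5c} by a straightforward induction on the number of vertices, using Theorem \ref{P5-P5c} as the ``one-step'' decomposition and closing the recursion at the base cases of pentagons and split graphs.

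For the ``if'' direction, I would argue by induction on the length of the construction sequence. The base case consists of pentagons and split graphs; these are trivially $\{P_5,\overline{P_5}\}$-free (a pentagon is self-complementary and contains no induced $P_5$, and a split graph contains no induced $2K_2$, hence no induced $P_5$, and by self-complementarity of the split property, no induced $\overline{P_5}$ either). For the inductive step, if $G$ is obtained by substitution from two smaller $\{P_5,\overline{P_5}\}$-free graphs, then since both $P_5$ and $\overline{P_5}$ are prime, the class of $\{P_5,\overline{P_5}\}$-free graphs is closed under substitution, so $G$ is $\{P_5,\overline{P_5}\}$-free. If $G$ is obtained by split graph unification from smaller $\{P_5,\overline{P_5}\}$-free graphs, then \ref{composition-free} applied with $H = P_5$ and $H = \overline{P_5}$ gives that $G$ is $\{P_5,\overline{P_5}\}$-free; and the case of split graph unification in the complement follows by applying the previous case to $\overline{G}$ and using that $\{P_5,\overline{P_5}\}$-freeness is preserved under complementation.

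For the ``only if'' direction, I would induct on $|V_G|$. Let $G$ be a $\{P_5,\overline{P_5}\}$-free graph. If $|V_G| \leq 1$, then $G$ is a split graph and we are done. Otherwise, apply Theorem \ref{P5-P5c} to obtain one of four possibilities: either $G$ is a split graph, or $G$ is a pentagon, or $G$ is obtained by substitution from smaller $\{P_5,\overline{P_5}\}$-free graphs, or one of $G$ and $\overline{G}$ is obtained by split graph unification from smaller $\{P_5,\overline{P_5}\}$-free graphs. In the first two cases, $G$ is already a base object. In the remaining cases, each of the smaller $\{P_5,\overline{P_5}\}$-free graphs can be expressed, by the induction hypothesis, as the result of repeatedly applying substitution, split graph unification, and split graph unification in the complement starting from pentagons and split graphs; concatenating these construction sequences with the final substitution or split graph unification yields the desired construction of $G$.

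There is essentially no obstacle beyond bookkeeping: Theorem \ref{P5-P5c} does all of the structural work, and Theorem \ref{cor} is obtained by iterating it. The only minor point worth stating carefully is that in the split graph unification case the two graphs $G_1$ and $G_2$ produced are strictly smaller than $G$ (which is guaranteed by the definition of a composable pair together with the construction in \ref{divuni-free}), so the induction is well-founded.
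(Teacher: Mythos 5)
Your proposal is correct and matches the paper's intent: the paper states this result as an immediate corollary of \ref{P5-P5c}, and your two inductions (on construction length for ``if,'' on $|V_G|$ for ``only if'') are exactly the routine iteration that makes it immediate.
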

\noindent 
Finally, a proof analogous to the proof of \ref{P5-P5c} (but without the use of \ref{Fouquet}) yields the following result for $\{P_5,\overline{P_5},C_5\}$-free graphs. 
\begin{theorem} \label{P5-P5c-C5} A graph $G$ is $\{P_5,\overline{P_5},C_5\}$-free if and only if at least one of the following holds: 
\begin{itemize}
\item $G$ is a split graph; 
\item $G$ is obtained by substitution from smaller $\{P_5,\overline{P_5},C_5\}$-free graphs; 
\item $G$ or $\overline{G}$ is obtained by split graph unification from smaller $\{P_5,\overline{P_5},C_5\}$-free graphs. 
\end{itemize} 
\end{theorem}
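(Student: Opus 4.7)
The plan is to mirror the proof of \ref{P5-P5c} almost verbatim, deleting only the detour through \ref{Fouquet} that was used there to single out the pentagon. For the ``if'' direction, split graphs are immediately $\{P_5,\overline{P_5},C_5\}$-free (no induced path in a split graph has more than three edges, and neither $C_5$ nor $\overline{P_5}$ embeds into a clique-plus-stable-set structure). Since each of $P_5$, $\overline{P_5}$, and $C_5$ is prime, the class of $\{P_5,\overline{P_5},C_5\}$-free graphs is closed under substitution, which handles the substitution case. For the split graph unification case, \ref{composition-free} shows that $H$-freeness is preserved for each $H \in \{P_5,\overline{P_5},C_5\}$ separately, and combining this with the fact that $\{P_5,\overline{P_5},C_5\}$-freeness is itself complement-invariant (because $\overline{P_5}$ is the complement of $P_5$ and $C_5$ is self-complementary) takes care of the ``$\overline{G}$ obtained by split graph unification'' subcase.

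For the ``only if'' direction, let $G$ be $\{P_5,\overline{P_5},C_5\}$-free. If $G$ contains a proper homogeneous set, then by the remark in section \ref{section:def}, $G$ is obtained by substitution from two strictly smaller induced subgraphs of itself, both of which inherit $\{P_5,\overline{P_5},C_5\}$-freeness, and we are done. So I may assume $G$ is prime and apply \ref{decomposition}: the homogeneous set alternative is excluded by primality, so either $G$ is a split graph (done) or one of $G$ and $\overline{G}$ admits a split graph divide. In the latter case, since $\{P_5,\overline{P_5},C_5\}$-freeness passes to complements, I may assume without loss of generality that $G$ itself admits a split graph divide, and then invoke \ref{divuni-free} to produce a composable pair $(G_1,G_2)$ of smaller graphs whose split graph unification is $G$. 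Applying \ref{divuni-free} once for each choice of $H \in \{P_5,\overline{P_5},C_5\}$ to this same composable pair gives simultaneous $P_5$-, $\overline{P_5}$-, and $C_5$-freeness of $G_1$ and $G_2$, as required.

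There is essentially no obstacle beyond bookkeeping: the decomposition machinery of \ref{decomposition} and \ref{divuni-free} was built with precisely this conclusion in mind, and the exclusion of $C_5$ from the class is exactly what removes the need to handle the pentagon branch of \ref{Fouquet}. The only subtle point worth stating explicitly is that the composable pair constructed in \ref{divuni} is intrinsic to the split graph divide and does \emph{not} depend on the choice of $H$, so the three applications of \ref{divuni-free} genuinely conspire to pin down a single pair $(G_1,G_2)$ that is $\{P_5,\overline{P_5},C_5\}$-free on both sides.
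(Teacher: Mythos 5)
Your proposal is correct and follows exactly the route the paper intends: the paper itself gives no separate proof of \ref{P5-P5c-C5}, stating only that it follows by an argument analogous to that of \ref{P5-P5c} with the appeal to \ref{Fouquet} removed, which is precisely what you carry out. Your explicit remark that the composable pair produced by \ref{divuni-free} is independent of the choice of $H$, so that the three applications yield a single pair that is simultaneously $P_5$-, $\overline{P_5}$-, and $C_5$-free, is a worthwhile clarification of a point the paper leaves implicit.
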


\section{Acknowledgment} 

We would like to thank Ryan Hayward, James Nastos, Paul Seymour, and Yori Zwols for many useful discussions.

\end{document}